%
%
%

\documentclass[graybox]{svmult}


\usepackage{type1cm}        
%
\usepackage{makeidx}         
\usepackage{graphicx}        
\usepackage{multicol}        
\usepackage[bottom]{footmisc}

\usepackage{newtxtext}       %
\usepackage[varvw]{newtxmath}       


\makeindex             


\usepackage{tikz}
\usepackage{tkz-euclide}
\usepackage{hyperref}

\begin{document}

\title*{A survey on the resolvent convergence}
\author{Joaquim Duran\orcidID{0009-0002-0586-7589}}
\institute{Joaquim Duran \at Centre de Recerca Matem\`atica, Edifici C, Campus Bellaterra, 08193 Bellaterra, Spain, \email{jduran@crm.cat}}

\maketitle

\abstract*{This chapter deals with the notion of the resolvent of a self-adjoint operator. We pay special attention to the convergence of unbounded self-adjoint operators in several resolvent senses, and how they are related to the convergence of their spectra. We also explore the relations that these notions of convergence have with the so-called strong graph limit, $G$-convergence, and $\Gamma$-convergence.}

\abstract{This chapter deals with the notion of the resolvent of a self-adjoint operator. We pay special attention to the convergence of unbounded self-adjoint operators in several resolvent senses, and how they are related to the convergence of their spectra. We also explore the relations that these notions of convergence have with the so-called strong graph limit, $G$-convergence, and $\Gamma$-convergence.}

\section{Motivation for a complex setting} \label{sec:Motivation}

The \emph{resolvent} is a suitable notion for studying the spectra and the convergence of densely defined, linear, unbounded, self-adjoint transformations on infinite dimensional, separable Hilbert spaces. Throughout this chapter we will only consider separable Hilbert spaces over $\mathbb C$, which shall be denoted by $(H, \langle \cdot, \cdot \rangle_H, \|\cdot\|_H)$ ---or, when no confusion arises, by $H$---, unless otherwise specified. 

\begin{definition}
    An \emph{operator} $T$ on a (complex and separable) Hilbert space~$H$ is a linear transformation from its domain, a linear and dense subspace of $H$ denoted by $\mathrm{Dom}(T)$, into $H$.
\end{definition}

Working in the complex setting is not a loss of generality, because every real Hilbert space can be embedded ---in the sense that we explain afterward--- into a complex Hilbert space ---called its \emph{complexification}---, and every real operator can be uniquely extended to a complex operator preserving the embedding ---also called its \emph{complexification}. We point out how.

On the one hand, if $(H, \langle \cdot, \cdot \rangle_H, \|\cdot\|_H)$ is a real Hilbert space, then it can be extended to be a complex Hilbert space in the following sense: there exists a Hilbert space $(H^\mathbb C, \langle \cdot, \cdot \rangle_{H^\mathbb C}, \|\cdot\|_{H^\mathbb C})$ over $\mathbb C$ and a map $\Upsilon : H \to H^\mathbb C$ such that:
\begin{enumerate}
    \item \label{item:inclusion} $\Upsilon$ is linear,
    \item \label{item:inner} $\langle \Upsilon u, \Upsilon v \rangle_{H^\mathbb C} = \langle u, v \rangle_{H}$ for all $u,v\in H$, and
    \item \label{item:decomposition} for every $h\in H^\mathbb C$ there exist unique $u,v\in H$ such that $h=\Upsilon u + i \Upsilon v$.
\end{enumerate}
Actually, $H^\mathbb C$ is (up to bijective isometries) the Cartesian product $H^\mathbb C := H\times H$ endowed with the operations
\begin{equation*}
    (u_1,v_1)+(u_2,v_2) = (u_1+u_2,v_1+v_2) \quad \text{and} \quad (\alpha+i\beta)(u,v) = (\alpha u - \beta v, \alpha v + \beta u)
\end{equation*}
for $u_1,u_2,v_1,v_2\in H$ and $\alpha,\beta\in \mathbb R$.\footnote{More generally, this is the definition of the complexification of a real vector space $H$.} The scalar product is defined by
\begin{equation*}
    \langle (u_1,v_1), (u_2,v_2) \rangle_{H^\mathbb C} :=  \langle u_1, u_2 \rangle_{H} + \langle v_1, v_2 \rangle_{H} + i \langle v_1, u_2 \rangle_{H} - i \langle u_1, v_2 \rangle_{H},
\end{equation*}
the associated norm is defined by
\begin{equation*}
    \|(u,v)\|_{H^\mathbb C} := \left( \|u\|_H^2+\|v\|_H^2 \right)^{1/2}
\end{equation*}
and the map $\Upsilon$ is the so-called \emph{standard embedding} of $H$ into $H^\mathbb C$, defined by 
\begin{equation*}
    \begin{matrix}
        \Upsilon: & H & \to & H^\mathbb C \\
        & u & \mapsto & (u,0).
    \end{matrix}
\end{equation*}
With an abuse of notation, the complexification of $H$ is sometimes written as $H^\mathbb C \equiv H\oplus H$, under the identification $(u,v) \equiv u+iv$.

On the other hand, if $T:\mathrm{Dom}(T)\to H$ is an $\mathbb R$-linear operator in a real Hilbert space~$H$, then it can be uniquely extended (up to bijective isometries) into a $\mathbb C$-linear operator $T^\mathbb C: \mathrm{Dom}(T)^\mathbb C \to H^\mathbb C$ such that $T^\mathbb C\Upsilon u = \Upsilon Tu$ for every $u\in H$. Actually, $T^\mathbb C$ is the $\mathbb C$-linear operator defined by 
\begin{equation*}
    \begin{matrix}
        T^\mathbb C: & \mathrm{Dom}(T)^\mathbb C &\to &  H^\mathbb C \\
        & (u,v) & \mapsto & (Tu,Tv).
    \end{matrix} 
\end{equation*}
It is clear from the definition that if $\mathrm{Dom}(T)$ is $\mathbb R$-linear and dense in $H$, then $\mathrm{Dom}(T)^\mathbb C$ is $\mathbb C$-linear and dense in $H^\mathbb C$; hence, $T^\mathbb C$ is indeed an operator. In addition, it is straightforward to verify that:
\begin{enumerate}
    \item $T^\mathbb C$ is self-adjoint in $H^\mathbb C$ provided that $T$ is self-adjoint in $H$.
    \item $T^\mathbb C$ is bijective in $H^\mathbb C$ if and only if $T$ is bijective in $H$. In such case, $(T^\mathbb C)^{-1} = (T^{-1})^\mathbb C$.
\end{enumerate}

In Definition~\ref{def:adjoint} we recall the notions of \emph{adjoint} of an operator $T$ and of \emph{self-adjointness}.\footnote{The reader shall not confuse the notation $T^*$ for the adjoint of an operator $T$ with the complex conjugate of a number $z\in \mathbb C$, denoted $\overline z$.} As shall be clear once we define the resolvent of a self-adjoint operator in Section~\ref{sec:definitions}, if one is given real self-adjoint operators defined on a real Hilbert space, by applying the results of this chapter to the complexifications one can inherit properties of the spectra and the convergence of the original real operators; see Remark~\ref{rmk:aboutDef}.\ref{item:rmkReal}.

\begin{definition} \label{def:adjoint}
    Let $T$ be an operator on a Hilbert space $H$. Let $D$ be the set of $u\in H$ for which there exists $u^*\in H$ satisfying that
    \begin{equation*}
        \langle T v, u \rangle_H = \langle v, u^* \rangle_H \quad \text{for all } v \in \mathrm{Dom}(T).
    \end{equation*}
    The \emph{adjoint} of $T$ is the operator whose domain is $\mathrm{Dom}(T^*):=D$, and whose action on each $u\in \mathrm{Dom}(T^*)$ is $T^* u := u^*$. An operator $A$ is called \emph{self-adjoint} if it coincides with its adjoint $A^*$, namely, if $A=A^*$.
\end{definition}

\section{Resolvent and spectrum of an operator} \label{sec:definitions}

In the finite dimensional setting, the spectrum of a self-adjoint operator $A$ is the set of its eigenvalues, that is, the numbers $\lambda\in \mathbb R$ for which the operator $A-\lambda I$ is not injective; $I$ denotes here the identity. In finite dimensions, this is equivalent to saying that the spectrum is the set of $\lambda\in \mathbb R$ for which the operator $A-\lambda I$ is not a bijection.

In the infinite dimensional setting, it is in general no longer true that the set of $\lambda\in \mathbb R$ for which the operator $A-\lambda I$ is not a bijection consists of eigenvalues of $A$, although they all belong to this set. However, it gives a notion for the \emph{spectrum} of $A$. This is what motivates the definition of the \emph{resolvent set} and the \emph{resolvent}.

\begin{definition} \label{def:resolvent}
    Let $A:\mathrm{Dom}(A)\to H$ be a self-adjoint operator. A number $\lambda\in \mathbb C$ is said to be in the \emph{resolvent set} of $A$, denoted $\rho(A)$, if the operator $A - \lambda I:\mathrm{Dom}(A)\to H$ is a bijection with bounded inverse, where $I$ is the identity in $H$. In such case, 
    \begin{equation*}
        R_\lambda(A) := (A - \lambda I)^{-1} 
    \end{equation*}
    is called the \emph{resolvent} of $A$ at $\lambda$. The \emph{spectrum} of $A$ is then defined to be the set $\sigma(A):=\mathbb C\setminus \rho(A)$.
\end{definition}

With an abuse of notation, from now on we will omit the identity $I$ and write $A-\lambda$.

\begin{remark} \label{rmk:aboutDef}
    Several remarks about this definition are in order. 
    \begin{enumerate}
        \item If the inverse of $A - \lambda$ exists, then it is linear, and by the closed graph Theorem~\cite[Theorem III.12]{ReedSimon1980} it is also bounded. Hence, the boundedness requirement for the inverse in the definition is redundant. However, the resolvent set uses to appear in the literature defined like this; see \cite[Definition in pg. 253]{ReedSimon1980} or \cite[Section 2.4]{Teschl2014}.
        \item \label{item:resolventNoEmpty} By the self-adjointness of $A$, it holds that $\mathbb C \setminus \mathbb R \subseteq \rho(A)$ \cite[Theorem VIII.3]{ReedSimon1980} ---and consequently $\sigma(A)\subseteq \mathbb R$---, hence $\rho(A)$ is never empty. Actually, for every $u\in \mathrm{Dom}(A)$ there holds
    \begin{equation*}
        \|(A-\lambda)u\|_H^2 = \langle (A-\lambda)u, (A-\lambda)u \rangle_H = \| (A-\mathrm{Re}\lambda)u \|_H^2+|\mathrm{Im}\lambda|^2 \|u\|_H^2.
    \end{equation*}
    Hence, 
    \begin{equation} \label{eq:BoundResolvent}
        \|(A-\lambda)^{-1}\|_{H\to H} \leq \frac{1}{|\mathrm{Im}\lambda|} \quad \text{ for every } \lambda\in \mathbb C \setminus \mathbb R.
    \end{equation}
    \item \label{item:rmkAdjointResolvent} By the self-adjointness of $A$, the adjoint of the resolvent $R_\lambda(A)$ satisfies $R_\lambda(A)^*=R_{\overline \lambda}(A)=(A-\overline \lambda)^{-1}$. Indeed, since 
    \begin{equation*}
        \langle u,v \rangle_H = \langle (A-\lambda)R_\lambda(A) u,v \rangle_H = \langle R_\lambda(A) u, (A-\overline \lambda)v \rangle_H = \langle u, R_\lambda(A)^*(A-\overline \lambda)v \rangle_H
    \end{equation*}
    holds for every $u\in H$ and every $v\in \mathrm{Dom}(A)$ ---and $A$ is densely defined in $H$---, we must have $R_\lambda(A)^*(A-\overline \lambda) = I$.
    \item The same definition is meaningful for (densely defined) closed operators, not necessarily self-adjoint. The interested reader may look at \cite[Section VIII.1]{ReedSimon1980} and \cite[Section 2.4]{Teschl2014}, where the resolvents are introduced for such operators ---beware that in~\cite{ReedSimon1980} the resolvent of one such operator $T$ at $\lambda\in\rho(T)$ is defined to be $(\lambda I-T)^{-1}$, with opposite sign. However, for technical reasons, self-adjointness has to be assumed as soon as one deals with the convergence of operators through their resolvents. Since this will be the main topic of this chapter, we already pose ourselves into the self-adjoint setting.
    \item \label{item:rmkReal} Let $H$ be a real Hilbert space and let $A:\mathrm{Dom}(A)\to H$ be a real self-adjoint operator. We could analogously define its resolvent set $\rho(A)$ to be the set of $\lambda\in \mathbb R$ for which $A-\lambda:\mathrm{Dom}(A)\to H$ is a bijection with bounded inverse, the resolvent of $A$ at one of such $\lambda$ to be $R_\lambda(A) = (A-\lambda)^{-1}$, and its spectrum to be the set $\sigma(A)=\mathbb R \setminus \rho(A)$; see \cite[pg. 133]{DalMaso1993}. However, if $H^\mathbb C$ and $A^\mathbb C$ denote the complexifications of $H$ and $A$, respectively, it easily follows that: 
        \begin{enumerate}
            \item $A^\mathbb C$ is self-adjoint in $H^\mathbb C$,
            \item $\rho(A^\mathbb C) = \rho(A)\cup (\mathbb C \setminus \mathbb R)$,
            \item $R_\lambda(A^\mathbb C) = (R_\lambda(A))^\mathbb C$ for every $\lambda\in \rho(A)$, and
            \item $\sigma(A^\mathbb C) = \sigma(A)$.
        \end{enumerate}
    Hence, there is no loss of generality if we work with the complexifications.
    \end{enumerate}
\end{remark} 

The spectrum of a self-adjoint operator can be decomposed in several ways into~several subsets, each of them satisfying some properties; see \cite[Sections VI.3 and~VII.2]{ReedSimon1980} and \cite[Section 3.3]{Teschl2014}. In this chapter, we will be interested in the following spectral decomposition.

\begin{definition}
    Let $A$ be a self-adjoint operator. 
    \begin{enumerate}
        \item The \emph{point spectrum} of $A$, denoted $\sigma_{\mathrm{p}}(A)$, is the set of all the eigenvalues of $A$.
        \item The \emph{discrete spectrum} of $A$, denoted $\sigma_{\mathrm{d}}(A)$, is the subset of $\sigma_{\mathrm{p}}(A)$ consisting of eigenvalues which are isolated in $\sigma(A)$ and have finite multiplicity.
        \item The \emph{essential spectrum} of $A$, denoted $\sigma_{\mathrm{ess}}(A)$, is the set $\sigma_{\mathrm{ess}}(A)= \sigma(A)\setminus \sigma_{\mathrm{d}}(A)$.
    \end{enumerate}
    In order to emphasize that the spectrum of $A$ is the disjoint union of its discrete and essential spectra, we write $\sigma(A) = \sigma_{\mathrm{d}}(A)\sqcup\sigma_{\mathrm{ess}}(A)$.
\end{definition}

As a consequence of Remark~\ref{rmk:aboutDef}.\ref{item:resolventNoEmpty}, the resolvent of a self-adjoint operator $A$ always exists for some $\lambda$. Moreover, notice that although $A$ could be unbounded and not defined everywhere in $H$, its resolvent at every $\lambda\in \rho(A)$ is a bounded operator defined everywhere in $H$. In addition, as the following theorem asserts ---a proof of which can be found in \cite[Theorem VIII.2]{ReedSimon1980}---, the resolvent $R_\lambda(A)$ is well-behaved in $\lambda\in\rho(A)$.

\begin{theorem}
    Let $A$ be a self-adjoint operator. The resolvent set $\rho(A)$ is a non-empty open subset of $\mathbb C$ on which the map $\rho(A)\ni \lambda\mapsto R_\lambda(A)$ is analytic. That is, $R_\lambda(A)$ has an absolutely convergent power series expansion around every $\lambda\in \rho(A)$. Moreover, the so-called \emph{first resolvent formula} holds:
    \begin{equation*}
        R_\lambda(A) - R_\mu(A) = (\lambda-\mu) R_\lambda(A) R_\mu(A) \quad \text{for every } \lambda,\mu \in \rho(A).
    \end{equation*}
\end{theorem}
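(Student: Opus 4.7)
The plan is to deduce non-emptiness directly from earlier work, handle openness and analyticity together via a Neumann-series argument, and close with a short algebraic manipulation for the first resolvent formula.

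First, non-emptiness is immediate: Remark~\ref{rmk:aboutDef}.\ref{item:resolventNoEmpty} already gives $\mathbb C\setminus \mathbb R \subseteq \rho(A)$, together with the quantitative bound~\eqref{eq:BoundResolvent} on the operator norm of the resolvent. So the remaining work is openness and analyticity, which I would prove simultaneously. Fix $\lambda_0 \in \rho(A)$ and write, for arbitrary $\lambda \in \mathbb C$, the formal identity
\begin{equation*}
    A - \lambda \;=\; (A - \lambda_0) - (\lambda - \lambda_0) I \;=\; (A - \lambda_0)\bigl(I - (\lambda-\lambda_0) R_{\lambda_0}(A)\bigr),
\end{equation*}
valid on $\mathrm{Dom}(A)$. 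Whenever $|\lambda - \lambda_0| < \|R_{\lambda_0}(A)\|_{H\to H}^{-1}$, the operator $I - (\lambda-\lambda_0)R_{\lambda_0}(A)$ is boundedly invertible by the classical Neumann series $\sum_{n\ge 0} (\lambda-\lambda_0)^n R_{\lambda_0}(A)^n$ (absolutely convergent in operator norm). Composing with $R_{\lambda_0}(A)$ on the right of the inverse then produces a bounded two-sided inverse of $A-\lambda$, which shows $\lambda \in \rho(A)$ and yields the closed-form expansion
\begin{equation*}
    R_\lambda(A) \;=\; \sum_{n=0}^\infty (\lambda-\lambda_0)^n \, R_{\lambda_0}(A)^{n+1}.
\end{equation*}
This single display delivers both openness of $\rho(A)$ (the disk of radius $\|R_{\lambda_0}(A)\|^{-1}$ about $\lambda_0$ lies inside $\rho(A)$) and analyticity of $\lambda \mapsto R_\lambda(A)$ in the operator-norm topology.

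For the first resolvent formula, I would just compute algebraically for $\lambda, \mu \in \rho(A)$:
\begin{equation*}
    R_\lambda(A) - R_\mu(A) \;=\; R_\lambda(A)\bigl[(A-\mu) - (A-\lambda)\bigr] R_\mu(A) \;=\; (\lambda - \mu)\, R_\lambda(A) R_\mu(A),
\end{equation*}
where in the first equality I used $I = R_\lambda(A)(A-\lambda)$ on the left summand and $I = (A-\mu)R_\mu(A)$ on the right summand (both valid on the appropriate domains, with the ranges of the resolvents landing in $\mathrm{Dom}(A)$).

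The only subtle point, and probably the main technical nuisance rather than a genuine obstacle, is making sure the two-sided inversion step is carried out cleanly: $A-\lambda_0$ is unbounded, so one has to verify that $(I - (\lambda-\lambda_0)R_{\lambda_0}(A))^{-1}$ sends $H$ into $\mathrm{Dom}(A)$ after composing with $R_{\lambda_0}(A)$, and that the resulting bounded operator is indeed both a left and a right inverse of $A-\lambda$. Once that bookkeeping is done, everything reduces to the Neumann series estimate and a one-line algebraic identity.
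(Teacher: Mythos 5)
Your proof is correct, and it is essentially the standard Neumann-series argument of the reference the paper cites for this theorem (\cite[Theorem~VIII.2]{ReedSimon1980}); the paper itself does not reproduce a proof. The one piece of bookkeeping you flag is easily closed: either note that the series $\sum_n(\lambda-\lambda_0)^nR_{\lambda_0}(A)^{n+1}$ equals $R_{\lambda_0}(A)\bigl(I-(\lambda-\lambda_0)R_{\lambda_0}(A)\bigr)^{-1}$ because $R_{\lambda_0}(A)$ commutes with itself, so the range manifestly lies in $\mathrm{Dom}(A)$, or invoke the closedness of $A$ on the partial sums.
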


\section{Resolvent convergence} \label{sec:ResolventConvergence}

Given a family of unbounded self-adjoint operators $\{A_n\}_{n\in \mathbb N}$, we want to come up with a notion of convergence to a limiting unbounded self-adjoint operator $A$, which provides information about the convergence of the spectra. 

A main drawback is that the operators $A_n$ and $A$ need not be defined everywhere in $H$, nor do they even need to have the same domain. As a consequence, the classical notions of convergence of operators in the weak topology, in the strong topology, or in the operator norm fail to be well defined.

Since it is natural to think that self-adjoint operators are close if certain bounded functions of them are close, resolvents seem to be a convenient tool to provide a notion of convergence. One advantage is that resolvents are bounded operators defined everywhere, and hence the classical notions of convergence are well defined. Another advantage is that, no matter which the family $\{A_n\}_n$ and the limiting operator $A$ are, by self-adjointness $A_n$ and $A$ have resolvents at every $\lambda\in\mathbb C\setminus\mathbb R$; recall Remark~\ref{rmk:aboutDef}.\ref{item:resolventNoEmpty}. All these considerations motivate the following definition.

\begin{definition}
    Let $A_n$ and $A$ be self-adjoint operators. We say that $A_n$ converges to $A$, as $n\to+\infty$,
    \begin{enumerate}
        \item \emph{in the norm resolvent sense} if $R_\lambda(A_n)$ converges to $R_\lambda(A)$ in the operator norm for every $\lambda\in \mathbb C\setminus\mathbb R$. In such case, we write $A_n \overset{\mathrm{n.r.s.}}{\underset{n\to+\infty}{\longrightarrow}} A$;
        \item \emph{in the strong resolvent sense} if $R_\lambda(A_n)u$ converges to $R_\lambda(A)u$ strongly for every $u\in H$ and every $\lambda\in \mathbb C\setminus\mathbb R$. In such case, we write $A_n \overset{\mathrm{s.r.s.}}{\underset{n\to+\infty}{\longrightarrow}} A$.
    \end{enumerate}
\end{definition}

We shall see in Section~\ref{sec:convergenceSpectra} that these notions of convergence indeed give information about the convergence of the spectra of self-adjoint operators. 

Despite the previous considerations, one could expect these definitions to require convergence for every $\lambda\in \mathbb C \setminus \left( \sigma(A) \cup \bigcup_{n\in \mathbb N} \sigma(A_n) \right)$. The following result, a proof of which can be found in \cite[Corollary 6.32]{Teschl2014}, shows that this is actually irrelevant, because resolvent convergence is enough to be tested in a single $\lambda_\star \in \mathbb C \setminus \left( \sigma(A) \cup \bigcup_{n\in \mathbb N} \sigma(A_n) \right)$.

\begin{theorem} \label{thm:RCenoughInOne}
    Suppose that $A_n$ converges to $A$, as $n\to+\infty$, either in the strong or in the norm resolvent sense for one $\lambda_\star \in \mathbb C \setminus \left( \sigma(A) \cup \underset{n\in \mathbb N}{\bigcup} \sigma(A_n) \right)$, namely, that $R_{\lambda_\star}(A_n)$ converges to $R_{\lambda_\star}(A)$ either in operator norm or strongly. Then the same is true for every $\lambda \in \mathbb C \setminus \left( \sigma(A) \cup \underset{n\in \mathbb N}{\bigcup} \sigma(A_n) \right)$.
\end{theorem}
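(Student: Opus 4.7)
The plan is to express $R_\lambda(A_n)$ as a continuous function of $R_{\lambda_\star}(A_n)$ via the first resolvent formula, and then transfer the hypothesized convergence at $\lambda_\star$ across that function. Writing the first resolvent formula for $A_n$ at the pair $\lambda,\lambda_\star$ and grouping terms yields
\begin{equation*}
[I - (\lambda - \lambda_\star) R_{\lambda_\star}(A_n)]\, R_\lambda(A_n) = R_{\lambda_\star}(A_n),
\end{equation*}
and a short computation (again using the first resolvent formula) shows that the bracketed operator is invertible with inverse $I + (\lambda - \lambda_\star) R_\lambda(A_n)$ whenever $\lambda \in \rho(A_n)$. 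Hence
\begin{equation*}
R_\lambda(A_n) = [I - (\lambda - \lambda_\star) R_{\lambda_\star}(A_n)]^{-1}\, R_{\lambda_\star}(A_n),
\end{equation*}
and the analogous identity holds with $A$ in place of $A_n$.

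For the norm case the conclusion is essentially automatic. By hypothesis $R_{\lambda_\star}(A_n) \to R_{\lambda_\star}(A)$ in operator norm, so $I - (\lambda - \lambda_\star) R_{\lambda_\star}(A_n)$ converges in norm to $I - (\lambda - \lambda_\star) R_{\lambda_\star}(A)$, which is invertible because $\lambda \in \rho(A)$. Since inversion is norm-continuous on the open set of invertible elements of $\mathcal B(H)$, the inverses converge in norm; multiplying by $R_{\lambda_\star}(A_n)$ then yields norm convergence of $R_\lambda(A_n)$ to $R_\lambda(A)$.

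For the strong case inversion is not continuous in the strong topology, so I would instead subtract the two expressions and rearrange to obtain, for every $u \in H$,
\begin{equation*}
(R_\lambda(A_n) - R_\lambda(A))\, u = [I + (\lambda - \lambda_\star) R_\lambda(A_n)] \bigl(R_{\lambda_\star}(A_n) - R_{\lambda_\star}(A)\bigr) v,
\end{equation*}
with $v := u + (\lambda - \lambda_\star) R_\lambda(A)\, u$ a fixed vector in $H$. The middle factor tends to zero strongly by hypothesis, and the outer factor is uniformly bounded in $n$ thanks to the a priori bound~(\ref{eq:BoundResolvent}), which gives $\|R_\lambda(A_n)\| \le 1/|\mathrm{Im}\,\lambda|$. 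The main obstacle is precisely securing this uniform bound: it cannot be extracted from the strong convergence assumption alone, and it is the role of self-adjointness, encoded in~(\ref{eq:BoundResolvent}), that compensates the failure of strong continuity of inversion.
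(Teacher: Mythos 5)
Your algebra is sound: the factorization $R_\lambda(A_n)=[I-(\lambda-\lambda_\star)R_{\lambda_\star}(A_n)]^{-1}R_{\lambda_\star}(A_n)$, the formula $[I-(\lambda-\lambda_\star)R_{\lambda_\star}]^{-1}=I+(\lambda-\lambda_\star)R_\lambda$, and the telescoped identity $(R_\lambda(A_n)-R_\lambda(A))u=[I+(\lambda-\lambda_\star)R_\lambda(A_n)](R_{\lambda_\star}(A_n)-R_{\lambda_\star}(A))v$ all check out, and this is the standard route (the chapter itself gives no proof, deferring to the cited reference, which argues the same way). The norm case is complete as written; there the Neumann series produces the uniform bound on the inverses for free, so no appeal to self-adjointness is needed and the argument covers every $\lambda$ in the common resolvent set, real or not.

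The strong case, however, has a genuine gap exactly at the point you flag as the ``main obstacle''. The bound \eqref{eq:BoundResolvent} gives $\|R_\lambda(A_n)\|\leq 1/|\mathrm{Im}\,\lambda|$, which is vacuous when $\lambda$ is real, yet the theorem is asserted for every $\lambda\in\mathbb C\setminus\big(\sigma(A)\cup\bigcup_n\sigma(A_n)\big)$, a set that may contain real points sitting in a common spectral gap. For such $\lambda$ the hypotheses do not supply any uniform bound on $\|R_\lambda(A_n)\|=\mathrm{dist}(\lambda,\sigma(A_n))^{-1}$, and the conclusion can actually fail: on $\ell^2(\mathbb N)$ with standard basis $\{e_k\}_k$, let $A_n$ be diagonal with $A_ne_n=\tfrac1n e_n$ and $A_ne_k=2e_k$ for $k\neq n$, and let $A=2I$. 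These are uniformly bounded self-adjoint operators with $A_n\to A$ strongly, hence $R_i(A_n)\to R_i(A)$ strongly by Theorem~\ref{thm:GenBounded}; moreover $0\notin\sigma(A)\cup\bigcup_n\sigma(A_n)=\{2\}\cup\{1/n:n\in\mathbb N\}$; but $\|R_0(A_n)\|=n$, so by the uniform boundedness principle $R_0(A_n)$ cannot converge strongly. Your argument (and the statement) is rescued by replacing $\sigma(A)\cup\bigcup_n\sigma(A_n)$ with its closure, equivalently by assuming $\inf_n\mathrm{dist}(\lambda,\sigma(A_n))>0$: for self-adjoint $A_n$ this yields the uniform bound $\|R_\lambda(A_n)\|\leq \mathrm{dist}\big(\lambda,\overline{\bigcup_n\sigma(A_n)}\big)^{-1}$ that your last display needs, and it is how the cited reference phrases the result. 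For $\lambda_\star,\lambda\in\mathbb C\setminus\mathbb R$ --- which is all the chapter ever uses downstream --- your proof is complete as it stands.
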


As an immediate consequence of this theorem, we can see that the resolvent convergence is invariant under adding real multiples of the identity. Indeed, if $A_n$ and $A$ are self-adjoint operators such that $A_n$ converges to $A$ in the norm (strong) resolvent sense, then also for every $\mu\in \mathbb R$ the operators $(A_n+\mu)$ and $(A+\mu)$ are self-adjoint, and $R_i(A_n+\mu) = R_{i-\mu}(A_n)$ converges to $R_i(A+\mu) = R_{i-\mu}(A)$ in the operator norm (strongly). This proves the following result.

\begin{corollary} \label{cor:shiftResolvent}
    Let $\mu\in \mathbb R$. $(A_n+\mu)$ converges to $(A+\mu)$ in the norm (strong) resolvent sense if and only if $A_n$ converges to $A$ in the norm (strong) resolvent sense.
\end{corollary}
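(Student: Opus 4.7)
The plan is essentially to carry out explicitly the one-line argument already sketched in the paragraph preceding the corollary, turning it into a clean ``if and only if'' via symmetry. The key identity is that for any self-adjoint operator $B$ and any $\mu\in \mathbb R$, one has $(B+\mu)-\lambda = B-(\lambda-\mu)$ as operators with the common domain $\mathrm{Dom}(B)$, so $\lambda\in \rho(B+\mu)$ iff $\lambda-\mu\in \rho(B)$, and in that case
\begin{equation*}
    R_\lambda(B+\mu) = R_{\lambda-\mu}(B).
\end{equation*}
This is the only algebraic observation needed.

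First I would check the self-adjointness of the shifted operators: since $\mu I$ is bounded and self-adjoint on $H$, $A_n+\mu$ and $A+\mu$ are self-adjoint on $\mathrm{Dom}(A_n)$ and $\mathrm{Dom}(A)$ respectively, so it makes sense to speak of their resolvent convergence. Then assume $A_n$ converges to $A$ in the norm (resp. strong) resolvent sense. Pick the test point $\lambda_\star = i+\mu$, which lies in $\mathbb C\setminus\mathbb R$ and hence in $\rho(A_n+\mu)\cap \rho(A+\mu)$ by Remark~\ref{rmk:aboutDef}.\ref{item:resolventNoEmpty}. By the identity above,
\begin{equation*}
    R_{\lambda_\star}(A_n+\mu) = R_i(A_n) \quad \text{and} \quad R_{\lambda_\star}(A+\mu) = R_i(A),
\end{equation*}
and the hypothesis tells us that $R_i(A_n)\to R_i(A)$ in the operator norm (resp. strongly). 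Now I invoke Theorem~\ref{thm:RCenoughInOne}: resolvent convergence only needs to be tested at a single $\lambda_\star\in \mathbb C\setminus\bigl(\sigma(A+\mu)\cup\bigcup_n \sigma(A_n+\mu)\bigr)$, which our $\lambda_\star=i+\mu\in \mathbb C\setminus\mathbb R$ satisfies. Therefore $(A_n+\mu)\to (A+\mu)$ in the norm (resp. strong) resolvent sense.

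The converse is handled by symmetry: apply what has just been proved with $A_n$ replaced by $A_n+\mu$, $A$ replaced by $A+\mu$, and $\mu$ replaced by $-\mu$, noting that $(A_n+\mu)+(-\mu)=A_n$ and similarly for $A$. No step is really an obstacle; the only thing worth being careful about is to invoke Theorem~\ref{thm:RCenoughInOne} so that one is not required to verify convergence at \emph{every} $\lambda\in\mathbb C\setminus\mathbb R$ (which would otherwise force one to track the real parts of $\lambda-\mu$ against $\mu$), and to confirm that shifting by a \emph{real} $\mu$ preserves self-adjointness so that the notion of resolvent convergence still applies to the shifted sequence.
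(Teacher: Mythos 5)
Your proposal is correct and follows essentially the same route as the paper: use the identity $R_\lambda(B+\mu)=R_{\lambda-\mu}(B)$ at a single non-real test point and then invoke Theorem~\ref{thm:RCenoughInOne} to upgrade to all of $\mathbb C\setminus\mathbb R$ (the paper tests at $\lambda=i$ for the shifted operators, you test at $\lambda_\star=i+\mu$; these are interchangeable). Your explicit treatment of the converse by replacing $\mu$ with $-\mu$ is the symmetry the paper leaves implicit.
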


We can analogously define the notion of \emph{weak resolvent convergence}. 

\begin{definition}
    Let $A_n$ and $A$ be self-adjoint operators. We say that $A_n$ converges to $A$, as $n\to+\infty$, \emph{in the weak resolvent sense} if $R_\lambda(A_n)u$ converges to $R_\lambda(A)u$ weakly for every $u\in H$ and every $\lambda\in \mathbb C\setminus\mathbb R$. In such case, we write $A_n \overset{\mathrm{w.r.s.}}{\underset{n\to+\infty}{\longrightarrow}} A$.
\end{definition}

It is clear that strong resolvent convergence leads to weak resolvent convergence. The following theorem, a proof of which can be found in \cite[Lemma 6.37]{Teschl2014}, states that these notions are actually equivalent for self-adjoint operators.

\begin{theorem} \label{thm:WRimpliesSR}
    Let $A_n$ and $A$ be self-adjoint operators. If $R_{\lambda_\star}(A_n)$ converges to $R_{\lambda_\star}(A)$ weakly in H for some $\lambda_\star \in \mathbb C \setminus \mathbb R$, then $R_{\lambda_\star}(A_n)$ converges to $R_{\lambda_\star}(A)$ strongly in H. 
\end{theorem}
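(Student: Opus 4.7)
\medskip
\noindent\textbf{Proof proposal.} Denote $R_n := R_{\lambda_\star}(A_n)$ and $R := R_{\lambda_\star}(A)$ for brevity. The strategy is to reduce the claim to the standard Hilbert space fact that weak convergence plus convergence of norms implies strong convergence: for any $u \in H$,
\begin{equation*}
    \|R_n u - R u\|_H^2 = \|R_n u\|_H^2 - 2\,\mathrm{Re}\langle R_n u, R u \rangle_H + \|R u\|_H^2,
\end{equation*}
and the assumed weak convergence $R_n u \rightharpoonup R u$ handles the cross term, so it suffices to establish that $\|R_n u\|_H \to \|R u\|_H$ for every $u\in H$.

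Next, I would transfer the weak convergence at $\lambda_\star$ to weak convergence at $\overline{\lambda_\star}$ by adjoints. By Remark~\ref{rmk:aboutDef}.\ref{item:rmkAdjointResolvent}, $R_n^* = R_{\overline{\lambda_\star}}(A_n)$ and $R^* = R_{\overline{\lambda_\star}}(A)$. For any $u,v \in H$,
\begin{equation*}
    \langle v, R_{\overline{\lambda_\star}}(A_n)\, u \rangle_H = \langle R_n v, u \rangle_H \longrightarrow \langle R v, u \rangle_H = \langle v, R_{\overline{\lambda_\star}}(A)\, u \rangle_H,
\end{equation*}
so $R_{\overline{\lambda_\star}}(A_n)\, u \rightharpoonup R_{\overline{\lambda_\star}}(A)\, u$ weakly for every $u\in H$.

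Finally, I would compute $\|R_n u\|_H^2$ via the first resolvent formula applied at the pair $\lambda_\star, \overline{\lambda_\star} \in \rho(A_n)$: since $\overline{\lambda_\star} - \lambda_\star = -2i\,\mathrm{Im}\,\lambda_\star$, we have
\begin{equation*}
    R_n^* R_n = R_{\overline{\lambda_\star}}(A_n)\, R_{\lambda_\star}(A_n) = \frac{R_{\overline{\lambda_\star}}(A_n) - R_{\lambda_\star}(A_n)}{-2i\,\mathrm{Im}\,\lambda_\star},
\end{equation*}
whence
\begin{equation*}
    \|R_n u\|_H^2 = \langle R_n^* R_n u, u \rangle_H = \frac{1}{-2i\,\mathrm{Im}\,\lambda_\star}\bigl(\langle R_{\overline{\lambda_\star}}(A_n) u, u\rangle_H - \langle R_n u, u\rangle_H\bigr).
\end{equation*}
By the two weak convergences just established, the right-hand side converges to the analogous expression with $A_n$ replaced by $A$, which by the same identity equals $\|R u\|_H^2$. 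This closes the argument.

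The main obstacle is the asymmetry: hypothesis gives weak convergence only at $\lambda_\star$, whereas the identity $R_n^* R_n = (R_{\overline{\lambda_\star}}(A_n) - R_{\lambda_\star}(A_n))/(-2i\,\mathrm{Im}\,\lambda_\star)$ requires control on the resolvent at $\overline{\lambda_\star}$ as well. The adjoint identity $R_\lambda(A)^* = R_{\overline{\lambda}}(A)$ bridges the gap, making the proof essentially a two-line consequence of the first resolvent formula combined with the Hilbert space criterion for strong convergence. Uniform boundedness of $\{R_n\}$ via \eqref{eq:BoundResolvent} is implicit (and keeps cross terms well-behaved), though not strictly needed once weak convergence is assumed.
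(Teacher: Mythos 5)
Your proof is correct, and it is essentially the argument of the reference the paper cites for this theorem (\cite[Lemma 6.37]{Teschl2014}): the paper itself gives no proof, but Teschl's proof is exactly the combination of the identity $\|R_n u\|_H^2 = \langle (R_n^*R_n) u,u\rangle_H = \frac{1}{\overline{\lambda_\star}-\lambda_\star}\langle (R_{\overline{\lambda_\star}}(A_n)-R_{\lambda_\star}(A_n))u,u\rangle_H$ with the criterion ``weak convergence plus convergence of norms implies strong convergence.'' (A small simplification: since $\langle R_{\overline{\lambda_\star}}(A_n)u,u\rangle_H=\overline{\langle R_n u,u\rangle}_H$, your identity reduces to $\|R_n u\|_H^2=\mathrm{Im}\,\langle R_n u,u\rangle_H/\mathrm{Im}\,\lambda_\star$, so the separate adjoint weak convergence step is not needed.)
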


It is also clear that norm resolvent convergence leads to strong resolvent convergence. However, the converse is not true in general, as the following example shows.

\begin{example} \label{ex:Position}
    Let $H=L^2(\mathbb R)$ be the Hilbert space of square integrable, complex valued functions in $\mathbb R$, and consider the so-called \emph{position operator} $P$, defined by
    \begin{eqnarray*}
        \mathrm{Dom}(P) &:=& \left \{\varphi\in L^2(\mathbb R): \int_\mathbb R x^2|\varphi(x)|^2 \, dx <+\infty \right\}, \\
        P(\varphi(x)) &:=& x\varphi(x) \quad \text{for all } f\in \mathrm{Dom}(P).
    \end{eqnarray*}
    Notice that $P$ is an unbounded operator in $L^2(\mathbb R)$, and that it is densely defined in $L^2(\mathbb R)$, because $C^\infty_c(\mathbb R) \subset \mathrm{Dom}(P)$. Moreover, it is straightforward to verify that $P$ is self-adjoint; see \cite[Example in pg. 68]{Teschl2014}. We show that the operator $A_n:=\frac{1}{n}P$ ---de-fined, for $n\in \mathbb N$, by $\mathrm{Dom}(A_n):=\mathrm{Dom}(P)$ and $A_nf = \frac{1}{n}Pf$ for every $f\in \mathrm{Dom}(P)$---, converges to the zero operator, as $n\to+\infty$, in the strong resolvent sense but not in the norm resolvent sense.

    First, we show the strong resolvent convergence to $0$. By Theorem~\ref{thm:RCenoughInOne}, it is enough to show that
    \begin{equation*}
        \|(A_n-i)^{-1} f - (0-i)^{-1} f \|_{L^2(\mathbb R)} \to 0 \quad \text{as } n\to+\infty, \text{ for every } f\in L^2(\mathbb R).
    \end{equation*}
    To this end, set $\varphi_n := (A_n-i)^{-1}f$. Then $f=(A_n-i)\varphi_n = \left(x/n-i\right)\varphi_n$, and hence 
    \begin{equation} \label{eq:auxExPosition}
        \|(A_n-i)^{-1} f - (0-i)^{-1} f \|_{L^2(\mathbb R)}^2 = \left\|\left(\frac{x}{n}-i\right)^{-1}f -if  \right\|_{L^2(\mathbb R)}^2 = \int_\mathbb R \frac{x^2}{x^2+n^2} |f(x)|^2 \ dx.
    \end{equation}
    Since $\left\{\mathbb R\ni x\mapsto \frac{x^2}{x^2+n^2} |f(x)|^2\right\}_n$ is a sequence of measurable functions converging pointwise to $0$ as $n\to+\infty$, which are all dominated by the integrable function $|f(x)|^2$, the Dominated convergence theorem \cite[Theorem A.24]{Teschl2014} ensures that the integral in \eqref{eq:auxExPosition} converges to $0$ as $n\to+\infty$, as desired.

    Next, we show that $A_n$ does not converge to $0$ in the norm resolvent sense as $n\to+\infty$. To this end, for every $k\in \mathbb N$ define $f_k(x):\mathbb R \to \mathbb C$ by
    \begin{equation*}
        f_k(x):= \begin{cases}
            x^{-2} & \text{if } x>k, \\
            0 & \text{elsewhere}.
        \end{cases}
    \end{equation*}
    It is clear that $f_k\in L^2(\mathbb R)\setminus\{0\}$, and by \eqref{eq:auxExPosition} we have that
    \begin{equation*}
        \frac{\|(A_n-i)^{-1} f_k - (0-i)^{-1} f_k \|_{L^2(\mathbb R)}^2}{\|f_k\|_{L^2(\mathbb R)}^2} = \frac{\int_k^{+\infty} \frac{x^2}{x^2+n^2}|x^{-2}|^2 \, dx}{\int_k^{+\infty}|x^{-2}|^2 \, dx} \geq \frac{k^2}{k^2+n^2} \ \ \text{for all } k,n\in \mathbb N.
    \end{equation*}
    As a consequence, 
    \begin{eqnarray*}
        \|(A_n-i)^{-1} - (0-i)^{-1} \|_{L^2(\mathbb R)\to L^2(\mathbb R)}^2 &=& \underset{f\in L^2(\mathbb R)\setminus\{0\}}{\sup} \frac{\|(A_n-i)^{-1} f - (0-i)^{-1} f \|_{L^2(\mathbb R)}^2}{\|f\|_{L^2(\mathbb R)}^2} \\
        &\geq& \underset{k\in \mathbb N}{\sup} \, \frac{k^2}{k^2+n^2} = 1 \quad \text{for all } n\in \mathbb N.
    \end{eqnarray*}
    This prevents $A_n$ from converging to $0$ in the norm resolvent sense as $n\to+\infty$.
\end{example}

We conclude this section pointing out that, on top of being convenient, these definitions of norm and strong resolvent convergence are good generalizations of the classical convergences in the operator norm and strong topology for bounded operators, respectively.

\begin{theorem} \label{thm:GenBounded}
    Let $A_n$ and $A$ be uniformly bounded self-adjoint operators. Then, the following statements hold.
    \begin{enumerate}
        \item $A_n$ converges to $A$ in the norm resolvent sense if and only if $A_n$ converges to $A$ in operator norm.
        \item $A_n$ converges to $A$ in the strong resolvent sense if and only if $A_n$ converges to $A$ strongly.
    \end{enumerate}
\end{theorem}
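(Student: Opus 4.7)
The whole theorem will rest on the \emph{second resolvent identity} for two bounded operators $A_n, A$ defined on all of $H$: for any $\lambda\in\mathbb{C}\setminus\mathbb{R}$ one has
\begin{equation*}
R_\lambda(A_n) - R_\lambda(A) = R_\lambda(A_n)\,(A-A_n)\,R_\lambda(A),
\end{equation*}
which follows by writing $R_\lambda(A_n) - R_\lambda(A) = R_\lambda(A_n)\bigl[(A-\lambda) - (A_n-\lambda)\bigr]R_\lambda(A)$ and using that, since $A_n$ and $A$ are bounded and self-adjoint, $\mathrm{Dom}(A_n)=\mathrm{Dom}(A)=H$, so $R_\lambda(A_n)(A_n-\lambda)=I$ and $(A-\lambda)R_\lambda(A)=I$ hold on the whole space. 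Multiplying the identity on the left by $(A_n-\lambda)$ and on the right by $(A-\lambda)$ inverts it into
\begin{equation*}
A - A_n = (A_n-\lambda)\,\bigl(R_\lambda(A_n) - R_\lambda(A)\bigr)\,(A-\lambda).
\end{equation*}
By Theorem~\ref{thm:RCenoughInOne} it will suffice to test resolvent convergence at a single point, which I will take to be $\lambda=i$, so that the a priori bound \eqref{eq:BoundResolvent} gives $\|R_i(A_n)\|_{H\to H},\|R_i(A)\|_{H\to H}\leq 1$.

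For the forward direction in the norm case, the first identity at $\lambda=i$ yields
\begin{equation*}
\|R_i(A_n) - R_i(A)\|_{H\to H} \leq \|R_i(A_n)\|_{H\to H}\,\|A-A_n\|_{H\to H}\,\|R_i(A)\|_{H\to H} \leq \|A-A_n\|_{H\to H},
\end{equation*}
so $\|A_n-A\|\to 0$ implies norm resolvent convergence. For the strong direction, the same identity applied to an arbitrary $u\in H$ gives $\bigl(R_i(A_n)-R_i(A)\bigr)u = R_i(A_n)(A-A_n)v$ with $v:=R_i(A)u\in H$, and since $\|(A-A_n)v\|_H\to 0$ by the assumed strong convergence, multiplying by the uniformly bounded operator $R_i(A_n)$ preserves the limit.

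For the reverse directions I use the inverted identity. Let $M>0$ satisfy $\|A_n\|_{H\to H},\|A\|_{H\to H}\leq M$ for all $n\in\mathbb N$; this is where the uniform boundedness hypothesis is indispensable. In the norm case,
\begin{equation*}
\|A - A_n\|_{H\to H} \leq \|A_n-i\|_{H\to H}\,\|R_i(A_n)-R_i(A)\|_{H\to H}\,\|A-i\|_{H\to H} \leq (M+1)^2\|R_i(A_n)-R_i(A)\|_{H\to H},
\end{equation*}
so norm resolvent convergence implies norm convergence. In the strong case, fix $u\in H$, set $w:=(A-i)u$, and write $(A-A_n)u = (A_n-i)\bigl(R_i(A_n)-R_i(A)\bigr)w$; strong resolvent convergence ensures $\|(R_i(A_n)-R_i(A))w\|_H\to 0$, and the uniform bound $\|A_n-i\|_{H\to H}\leq M+1$ then forces $\|(A-A_n)u\|_H\to 0$.

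The only genuinely delicate point is the reverse implication in the strong case: without uniform boundedness, the pre-multiplication by $(A_n-i)$ could amplify the vanishing quantity $(R_i(A_n)-R_i(A))w$ in an uncontrolled way, and indeed Example~\ref{ex:Position} shows that unbounded families behave differently. The uniform bound $\|A_n\|\leq M$ closes exactly this gap and makes the plan work.
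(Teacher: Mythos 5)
Your proof is correct, and it follows the standard route via the second resolvent identity $R_\lambda(A_n)-R_\lambda(A)=R_\lambda(A_n)(A-A_n)R_\lambda(A)$ together with its inversion and the uniform bound on $\|A_n-i\|$ for the reverse implications. The paper itself gives no proof (it defers to the cited Reed--Simon results), but this is essentially the same argument used there, so nothing further is needed.
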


As a consequence of this theorem, which is a combination of \cite[Theorem VIII.18 and Problem VIII.28]{ReedSimon1980}, the analog for weak resolvent convergence is not true. To see this, let $A_n$ and $A$ be uniformly bounded self-adjoint operators such that $A_n$ converges weakly but not strongly to $A$, and assume in addition that there was convergence in the weak resolvent sense. Then, by Theorem~\ref{thm:WRimpliesSR}, $A_n$ would converge to $A$ in the strong resolvent sense, and hence, by Theorem~\ref{thm:GenBounded}, $A_n$ would converge to $A$ strongly, leading to a contradiction. We summarize this obstruction in the following corollary.

\begin{corollary} \label{cor:BoundedWeak}
    Let $A_n$ and $A$ be uniformly bounded self-adjoint operators. If $A_n$ converges to $A$ weakly but not strongly, then $A_n$ does not converge to $A$ in the weak resolvent sense.
\end{corollary}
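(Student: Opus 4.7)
The plan is to argue by contradiction, combining the two theorems just invoked in the discussion preceding the corollary. I would suppose, toward a contradiction, that $A_n$ does converge to $A$ in the weak resolvent sense, and then deduce strong convergence $A_n u \to Au$ for every $u\in H$, contradicting the hypothesis that strong convergence fails.

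The argument proceeds in two steps. First, Theorem~\ref{thm:WRimpliesSR} upgrades weak resolvent convergence to strong resolvent convergence at any fixed $\lambda_\star\in\mathbb C\setminus\mathbb R$; applying it (say at $\lambda_\star=i$) yields that $R_i(A_n)\to R_i(A)$ strongly. By the definition of strong resolvent convergence together with Theorem~\ref{thm:RCenoughInOne} (or simply by unpacking the definition, since the weak resolvent assumption is for every $\lambda\in\mathbb C\setminus\mathbb R$), we conclude $A_n\xrightarrow{\mathrm{s.r.s.}} A$. Second, because $\{A_n\}$ and $A$ are uniformly bounded and self-adjoint, Theorem~\ref{thm:GenBounded}(2) applies and forces $A_n\to A$ strongly in the ordinary sense. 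This contradicts the hypothesis.

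The main obstacle is essentially nonexistent: the work is entirely done by Theorems~\ref{thm:WRimpliesSR} and~\ref{thm:GenBounded}, and the only delicate point is to confirm that their hypotheses are satisfied under our assumptions. Self-adjointness of $A_n$ and $A$ is given, and uniform boundedness is what makes the equivalence in Theorem~\ref{thm:GenBounded} available, so no further verification is needed. Thus the corollary follows directly by chaining the two implications and taking the contrapositive.
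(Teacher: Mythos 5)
Your argument is correct and is exactly the paper's own proof: assume weak resolvent convergence, upgrade to strong resolvent convergence via Theorem~\ref{thm:WRimpliesSR}, then use Theorem~\ref{thm:GenBounded} and uniform boundedness to deduce strong convergence, contradicting the hypothesis. No differences worth noting.
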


\section{Convergence of spectra} \label{sec:convergenceSpectra}

It is straightforward from Definition~\ref{def:resolvent} that the resolvents of self-adjoint operators encode information about their spectra. In this section, we show how the resolvent convergence actually gives information about the convergence of spectra.

\begin{theorem}
    Let $A_n$ and $A$ be self-adjoint operators. If $A_n$ converges to $A$ in the strong resolvent sense, then
    \begin{equation*}
        \sigma(A) \subseteq \underset{n\to+\infty}{\lim} \sigma(A_n),
    \end{equation*}
    meaning that for every $\lambda\in \sigma(A)$ there exists a sequence of $\lambda_n\in \sigma(A_n)$ converging to~$\lambda$.
\end{theorem}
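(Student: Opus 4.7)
The plan is to argue by contradiction. Suppose $\lambda\in \sigma(A)$ but no sequence $\lambda_n\in \sigma(A_n)$ converges to $\lambda$. Then for some $\epsilon>0$, along a subsequence that I relabel, the open ball $B_\epsilon(\lambda)\subseteq \mathbb{C}$ lies entirely in $\rho(A_n)$ for every $n$. Self-adjointness of $A_n$ (via the spectral theorem) yields the uniform bound $\|R_\lambda(A_n)\|_{H\to H}\leq 1/\epsilon$. The strategy is to upgrade this uniform bound, together with the strong resolvent convergence, into the conclusion that $\lambda\in \rho(A)$, contradicting $\lambda\in \sigma(A)$.

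First I would fix $\mu = \lambda + i\delta$ with $0<\delta<\epsilon/2$ and, for each $n$, expand $R_\lambda(A_n)$ as the Neumann-type series $R_\lambda(A_n) = \sum_{k=0}^{\infty} (\lambda-\mu)^k R_\mu(A_n)^{k+1}$, obtained by iterating the first resolvent formula. Since $B_\epsilon(\lambda)\subseteq \rho(A_n)$ and $A_n$ is self-adjoint, $\|R_\mu(A_n)\|\leq 1/(\epsilon-\delta)$, so the series is majorized by a geometric series independent of $n$. Because $\mu\in \mathbb{C}\setminus \mathbb{R}$, strong resolvent convergence gives $R_\mu(A_n)f\to R_\mu(A)f$ for every $f$, and the uniform bound propagates by an easy induction to $R_\mu(A_n)^{k+1}f\to R_\mu(A)^{k+1}f$ for every $k$. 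A dominated-convergence argument for series then yields $R_\lambda(A_n)f\to Bf$ in $H$, where $B := \sum_k (\lambda-\mu)^k R_\mu(A)^{k+1}$ is a bounded operator.

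It remains to identify $B$ with $R_\lambda(A)$. For $f\in H$, put $\psi_n := R_\lambda(A_n)f$, so $\psi_n\to \psi := Bf$. Rewriting $(A_n-i)\psi_n = f+(\lambda-i)\psi_n$, applying $R_i(A_n)$, and using strong resolvent convergence at $i$ together with the uniform bound $\|R_i(A_n)\|\leq 1$ from \eqref{eq:BoundResolvent}, I obtain $\psi = R_i(A)\bigl(f+(\lambda-i)\psi\bigr)$, which means $\psi\in \mathrm{Dom}(A)$ and $(A-\lambda)\psi = f$; hence $A-\lambda$ is surjective with right inverse $B$. For injectivity, if $g\in \mathrm{Dom}(A)$ satisfies $(A-\lambda)g=0$, let $g_n := (\lambda-i)R_i(A_n)g\in \mathrm{Dom}(A_n)$; strong resolvent convergence at $i$ gives $g_n\to g$, while a direct computation shows $(A_n-\lambda)g_n = (\lambda-i)(g-g_n)\to 0$, so $\|g_n\|\leq \epsilon^{-1}\|(A_n-\lambda)g_n\|\to 0$ forces $g=0$. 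Combined with the boundedness of $B$, this yields $\lambda\in \rho(A)$, the desired contradiction.

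The main obstacle I anticipate is the clean justification of the limit exchange inside the Neumann series, i.e., combining the uniform geometric majorant in $n$ with the merely strong (not norm) convergence at each power. Everything else is routine bookkeeping with the first resolvent formula and with strong resolvent convergence at the two complex points $\mu$ and $i$.
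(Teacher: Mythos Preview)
Your argument is correct; the anticipated obstacle is not a real one, since the geometric majorant $\delta^k(\epsilon-\delta)^{-(k+1)}\|f\|$ is uniform in $n$ and permits a routine dominated-convergence exchange, while the bound $\|R_\mu(A)\|\leq 1/(\epsilon-\delta)$ needed for the limit series follows from lower semicontinuity of the operator norm under strong limits.

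The paper does not give its own proof but defers to \cite[Theorem~VIII.24(a)]{ReedSimon1980}, whose argument is quite different from yours. Along the subsequence with $(\lambda-\epsilon,\lambda+\epsilon)\cap\sigma(A_n)=\emptyset$ one picks a continuous bump $f$ supported in that interval with $f(\lambda)=1$, observes $f(A_n)=0$ by the spectral theorem, and then invokes the functional-calculus convergence result (Reed--Simon's Theorem~VIII.20: strong resolvent convergence implies $f(A_n)\to f(A)$ strongly for every bounded continuous $f$) to conclude $f(A)=0$, contradicting $\lambda\in\sigma(A)$. That route is short but leans on a nontrivial black box. Your approach is more self-contained, using only the first resolvent formula and the identity $\|R_z(A)\|=1/d(z,\sigma(A))$ for self-adjoint $A$; the price is the explicit series manipulation and the separate surjectivity/injectivity identification of $B$. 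Incidentally, once you grant that identity and the lower semicontinuity of the norm, the entire Neumann-series machinery can be bypassed: with $\mu=\lambda+i\delta$ and $0<\delta<\epsilon/2$, the chain $1/\delta\leq\|R_\mu(A)\|\leq\liminf_n\|R_\mu(A_n)\|\leq 1/(\epsilon-\delta)$ already forces $\epsilon\leq 2\delta$, giving the contradiction directly.
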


This theorem, a proof of which can be found in \cite[Theorem VIII.24(a)]{ReedSimon1980}, says that the spectrum of the limiting operator can not expand if there is strong resolvent convergence. However, in general it can suddenly contract, as the following example shows.

\begin{example}
    We revisit Example~\ref{ex:Position}, where we have shown that, in $H=L^2(\mathbb R)$, the scaled position operator $A_n:=\frac{1}{n}P$ converges to the zero operator in the strong resolvent sense as $n\to+\infty$ (and not in the norm resolvent sense). The limiting operator, $A=0$, has trivial spectrum $\sigma(A)=\{0\}$. Instead, we show that $\sigma(P) = \mathbb R$, and hence $\sigma(A_n) = \mathbb R$ for every $n\in \mathbb N$.

    Let $\lambda\in \mathbb R$. In order to see that $\lambda\in\sigma(P)$, we claim that it is enough to show that there exists a sequence $\{\varphi_k\}_{k\in \mathbb N} \subset \mathrm{Dom}(P)$ with $\|\varphi_k\|_{L^2(\mathbb R)}=1$ satisfying that $\|(P-\lambda)\varphi_k\|_{L^2(\mathbb R)}\to 0$ as $k\to+\infty$; such a sequence is called a \emph{Weyl sequence}, see \cite[Lemma~2.17]{Teschl2014}. Indeed, if such a sequence exists but $\lambda$ did not belong to $\sigma(P)$, since in particular $\lambda$ would be in the resolvent set of $P$ ---that is, the resolvent $R_\lambda(P)$ would be a bounded operator---, we could bound
    \begin{equation*}
        1 = \|\varphi_k\|_{L^2(\mathbb R)} = \|R_\lambda(P)(P-\lambda)\varphi_k\|_{L^2(\mathbb R)} \leq \|R_\lambda(P)\|_{L^2(\mathbb R)\to L^2(\mathbb R)} \|(P-\lambda)\varphi_k\|_{L^2(\mathbb R)},
    \end{equation*}
    with the right hand side converging to zero as $k\to+\infty$, reaching a contradiction.

    We conclude exhibiting a Weyl sequence for the arbitrary $\lambda\in \mathbb R$. For every $k\in \mathbb N$, define $\varphi_k:\mathbb R \to \mathbb C$ as 
    \begin{equation*}
        \varphi_k(x) := \begin{cases}
            \sqrt k & \text{if } x\in \left( \lambda - \frac{1}{2k}, \lambda + \frac{1}{2k} \right), \\ 
            0 & \text{elsewhere}.
        \end{cases}
    \end{equation*}
    It is clear that $\varphi_k\in \mathrm{Dom}(P)$ and that $\|\varphi_k\|_{L^2(\mathbb R)}=1$ for every $k\in \mathbb N$. Moreover,
    \begin{equation*}
        \|(P-\lambda)\varphi_k\|_{L^2(\mathbb R)}^2 = \int_{\lambda - \frac{1}{2k}}^{\lambda + \frac{1}{2k}} (x-\lambda)^2k \, dx = \frac{1}{12k^2} \longrightarrow 0 \quad \text{as } k\to+\infty.
    \end{equation*}
\end{example}

This example shows the possibility that the spectrum of the limiting operator contracts under strong resolvent convergence, by exhibiting some operators that do not, in addition, converge in the norm resolvent sense ---recall Example~\ref{ex:Position}. The following theorem, a proof of which can be found in \cite[Theorem VIII.23(a)]{ReedSimon1980} and \cite[Satz 9.24(a)]{Weidmann2000}, shows that this can not happen if there is convergence in the norm resolvent sense.

\begin{theorem}
    Let $A_n$ and $A$ be self-adjoint operators. If $A_n$ converges to $A$ in the norm resolvent sense, then
    \begin{equation*}
        \sigma(A) = \underset{n\to+\infty}{\lim} \sigma(A_n),
    \end{equation*}
    meaning that:
    \begin{enumerate}
        \item for every $\lambda\in \sigma(A)$ there exists a sequence of $\lambda_n\in \sigma(A_n)$ converging to $\lambda$, and
        \item if $\lambda_n\in \sigma(A_n)$ converge to some $\lambda$, then $\lambda\in \sigma(A)$.
    \end{enumerate}
    The same holds true for the essential and discrete spectra, namely,
    \begin{equation*}
        \sigma_{\mathrm{ess}}(A) = \underset{n\to+\infty}{\lim} \sigma_{\mathrm{ess}}(A_n) \quad \text{and} \quad \sigma_{\mathrm{d}}(A) = \underset{n\to+\infty}{\lim} \sigma_{\mathrm{d}}(A_n).
    \end{equation*}
\end{theorem}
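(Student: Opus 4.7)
The strategy is to reduce the problem to bounded self-adjoint operators by passing to resolvents at $\lambda=i$, so that classical spectral-theoretic results for bounded operators under norm convergence apply directly.

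The first inclusion, $\sigma(A)\subseteq \lim_n \sigma(A_n)$, follows at once from the preceding theorem, since norm resolvent convergence implies strong resolvent convergence. For the reverse inclusion I would invoke the spectral mapping identity
\begin{equation*}
    \sigma(R_i(B)) \setminus \{0\} = \{(\lambda-i)^{-1} : \lambda \in \sigma(B)\},
\end{equation*}
valid for every self-adjoint operator $B$, which can be verified from the first resolvent formula. If $\lambda_n \in \sigma(A_n)$ satisfies $\lambda_n \to \lambda$, then $\mu_n := (\lambda_n - i)^{-1} \in \sigma(R_i(A_n))$ converges to $\mu := (\lambda - i)^{-1} \neq 0$. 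Since $R_i(A_n) \to R_i(A)$ in operator norm, a Neumann series argument applied to the decomposition $R_i(A_n) - \mu_n I = (R_i(A) - \mu I) + [R_i(A_n) - R_i(A)] + (\mu - \mu_n)I$ shows that $\mu\notin\sigma(R_i(A))$ would force $R_i(A_n)-\mu_n I$ to be invertible for large $n$, a contradiction. Hence $\mu \in \sigma(R_i(A))$, and pulling back through the spectral mapping yields $\lambda \in \sigma(A)$.

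For the essential spectrum I would use the analogous identity
\begin{equation*}
    \sigma_{\mathrm{ess}}(R_i(B)) \setminus \{0\} = \{(\lambda-i)^{-1} : \lambda \in \sigma_{\mathrm{ess}}(B)\},
\end{equation*}
which follows because the homeomorphism $\lambda \mapsto (\lambda-i)^{-1}$ puts isolated eigenvalues of $B$ of finite multiplicity in bijection with isolated eigenvalues of $R_i(B)$ of the same finite multiplicity. I would then invoke the classical two-sided convergence of essential spectra for norm-converging bounded self-adjoint operators. That in turn rests on the continuity of Riesz spectral projections: enclosing an isolated eigenvalue $\mu_0$ of $R_i(A)$ of finite multiplicity $m$ by a small positively oriented contour disjoint from the rest of $\sigma(R_i(A))$, the contour-integral projection $P_n$ for $R_i(A_n)$ converges in norm to the corresponding projection for $R_i(A)$, so for large $n$ the spectrum of $R_i(A_n)$ inside the contour consists of finitely many eigenvalues of total multiplicity $m$. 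This simultaneously prevents essential-spectrum points from accumulating at $\mu_0$ and ensures isolated finite-multiplicity eigenvalues are preserved in the limit; translating back through the spectral mapping gives both inclusions for $\sigma_{\mathrm{ess}}$. The discrete spectrum convergence is then immediate from $\sigma_\mathrm{d} = \sigma \setminus \sigma_{\mathrm{ess}}$ together with the Riesz-projection stability just described.

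The main obstacle I anticipate is establishing and exploiting the Riesz-projection continuity used in the essential and discrete spectrum parts: the purely algebraic spectral mapping reduction does not distinguish isolated eigenvalues of finite multiplicity from the rest of the spectrum, and the contour-integral Riesz projection argument is precisely what supplies this distinction and its stability under norm convergence.
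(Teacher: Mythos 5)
The paper does not actually prove this theorem (it defers to \cite[Theorem VIII.23(a)]{ReedSimon1980} and \cite[Satz 9.24(a)]{Weidmann2000}), so I compare your argument with those standard proofs. Your treatment of the full spectrum is correct and is the standard route: the inclusion $\sigma(A)\subseteq\lim_n\sigma(A_n)$ from the strong-resolvent theorem, and the reverse inclusion from the spectral mapping $\sigma(R_i(B))\setminus\{0\}=\{(\lambda-i)^{-1}:\lambda\in\sigma(B)\}$ combined with the openness of the set of boundedly invertible operators. Two caveats even there: $R_i(A_n)$ is normal but \emph{not} self-adjoint (its adjoint is $R_{-i}(A_n)$), so every appeal to ``classical results for bounded self-adjoint operators'' must be replaced by the normal-operator version or by passage to a genuinely self-adjoint bounded transform; and the transfer of essential spectra under $\lambda\mapsto(\lambda-i)^{-1}$ needs the observation that for normal operators the Riesz projection at an isolated eigenvalue is the orthogonal spectral projection, so algebraic and geometric multiplicities coincide.

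The genuine gap is the inclusion $\sigma_{\mathrm{ess}}(A)\subseteq\lim_n\sigma_{\mathrm{ess}}(A_n)$. Your Riesz-projection argument is anchored at an isolated finite-multiplicity eigenvalue $\mu_0$ of $R_i(A)$: it yields that discrete eigenvalues of $A$ are approximated by discrete eigenvalues of $A_n$, and that essential-spectrum points of $A_n$ cannot accumulate at a discrete eigenvalue of $A$ (the ``no pollution'' direction for $\sigma_{\mathrm{ess}}$). It says nothing about a point $\lambda\in\sigma_{\mathrm{ess}}(A)$, around which no such contour exists; in particular it does not exclude that, near $\lambda$, each $\sigma(A_n)$ consists only of finitely many discrete eigenvalues whose total multiplicity grows with $n$. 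Rank is only lower semicontinuous under norm limits, so norm convergence of finite-rank spectral projections of $A_n$ to an infinite-rank one for $A$ is not by itself a contradiction. A separate argument is needed: for instance, take a singular Weyl sequence $u_j$ for $R_i(A)$ at $\mu=(\lambda-i)^{-1}$, note that $\|(R_i(A_n)-\mu)u_j\|\leq\|(R_i(A)-\mu)u_j\|+\|R_i(A_n)-R_i(A)\|$, and use the spectral measure of the normal operator $R_i(A_n)$ together with $u_j\rightharpoonup 0$ to show that a ball of radius comparable to $\|R_i(A_n)-R_i(A)\|$ around $\mu$ carries an infinite-rank spectral projection of $R_i(A_n)$, hence meets $\sigma_{\mathrm{ess}}(R_i(A_n))$; equivalently, pass to the Calkin algebra and use the Lipschitz continuity of the spectrum on normal elements. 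Finally, the ``no pollution'' direction for the discrete spectrum --- $\lambda_n\in\sigma_{\mathrm{d}}(A_n)$, $\lambda_n\to\lambda$ implies $\lambda\in\sigma_{\mathrm{d}}(A)$ --- is false as literally stated (take $A_n=A$ constant with eigenvalues $1/k$ accumulating at $0\in\sigma_{\mathrm{ess}}(A)$), so it cannot be ``immediate from $\sigma_{\mathrm{d}}=\sigma\setminus\sigma_{\mathrm{ess}}$''; only the approximation direction holds for $\sigma_{\mathrm{d}}$, and that is the part your Riesz-projection step correctly delivers.
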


We conclude this section saying that the convergence of the essential spectra under norm resolvent convergence is not proven throughout \cite{ReedSimon1980}, but one can find a statement reminiscent of this in \cite[Problem VIII.49(c)]{ReedSimon1980}. There, it is asserted that if each $A_n$ has purely discrete spectrum in an interval $(a,b)\subset \mathbb R$ ---meaning that $\sigma(A_n)\cap (a,b) = \sigma_{\mathrm{d}}(A_n) \cap (a,b)$--- and $A_n$ converges to $A$ in the norm resolvent sense, then $A$ also has purely discrete spectrum in $(a,b)$.

\section{Relation with strong graph limit} \label{sec:relationGraph}

In Section~\ref{sec:ResolventConvergence} we have motivated the definition of resolvent convergence, thinking that unbounded self-adjoint operators are close if certain bounded functions of them are close. Alternatively, one could think that unbounded self-adjoint operators are close if their graphs are close. This motivates another notion of convergence which, a priori, seems unrelated to resolvents.

\begin{definition}
    Let $A_n$ be self-adjoint operators in a Hilbert space $H$. We say that a pair of elements $(\varphi,\psi)\in H\times H$ is in the \emph{strong graph limit} of $A_n$ if there exist $\varphi_n\in \mathrm{Dom}(A_n)$ such that both $\varphi_n \to \varphi$ and $A_n\varphi_n \to \psi$ strongly in $H$, as $n\to +\infty$. If the set of such pairs is the graph of a self-adjoint operator $A$, then we say that $A$ is the \emph{strong graph limit} of $A_n$, and we write $A = \underset{n\to +\infty}{\mathrm{s.gr.lim}} \, A_n$.
\end{definition}

Nevertheless, this notion turns out to be equivalent to strong resolvent convergence.

\begin{theorem} \label{thm:SRCiffSGL}
    Let $A_n$ and $A$ be self-adjoint operators. Then $A_n$ converges to $A$ in the strong resolvent sense if and only if $A$ is the strong graph limit of $A_n$.
\end{theorem}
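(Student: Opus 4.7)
The plan is to prove both implications by testing resolvents at the single point $\lambda = i$, which suffices by Theorem~\ref{thm:RCenoughInOne}, and by exploiting the uniform bound $\|R_i(A_n)\|_{H\to H}\leq 1$ coming from \eqref{eq:BoundResolvent}.

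For the forward direction, assume $A_n\to A$ in the strong resolvent sense. I must verify that the strong graph limit exists and coincides with the graph of $A$, so I check both inclusions. To see $\mathrm{graph}(A)$ is contained in the strong graph limit, take $\varphi\in\mathrm{Dom}(A)$, set $\psi=(A-i)\varphi$, and define $\varphi_n:=R_i(A_n)\psi\in\mathrm{Dom}(A_n)$. By strong resolvent convergence, $\varphi_n\to R_i(A)\psi=\varphi$, and from $(A_n-i)\varphi_n=\psi$ we get $A_n\varphi_n=\psi+i\varphi_n\to\psi+i\varphi=A\varphi$, as desired. For the reverse inclusion, suppose $\varphi_n\in\mathrm{Dom}(A_n)$ with $\varphi_n\to\varphi$ and $A_n\varphi_n\to\psi$. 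Then $(A_n-i)\varphi_n\to\psi-i\varphi$, and using the identity $\varphi_n=R_i(A_n)(A_n-i)\varphi_n$ together with the splitting
\begin{equation*}
\varphi_n-R_i(A)(\psi-i\varphi)=R_i(A_n)\bigl[(A_n-i)\varphi_n-(\psi-i\varphi)\bigr]+\bigl[R_i(A_n)-R_i(A)\bigr](\psi-i\varphi),
\end{equation*}
the first summand vanishes because $\|R_i(A_n)\|_{H\to H}\leq 1$ and the second vanishes by strong resolvent convergence. Hence $\varphi=R_i(A)(\psi-i\varphi)$, so $\varphi\in\mathrm{Dom}(A)$ and $A\varphi=\psi$.

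For the converse, assume $A$ is the strong graph limit of $A_n$, and fix $f\in H$. I want $R_i(A_n)f\to R_i(A)f$. Let $\varphi:=R_i(A)f\in\mathrm{Dom}(A)$, so $(A-i)\varphi=f$. Since $(\varphi,A\varphi)$ lies in the strong graph limit, there exist $\varphi_n\in\mathrm{Dom}(A_n)$ with $\varphi_n\to\varphi$ and $A_n\varphi_n\to A\varphi$, whence $f_n:=(A_n-i)\varphi_n\to f$ and $\varphi_n=R_i(A_n)f_n$. Then
\begin{equation*}
R_i(A_n)f-R_i(A)f=R_i(A_n)(f-f_n)+(\varphi_n-\varphi),
\end{equation*}
and both terms tend to zero (the first by $\|R_i(A_n)\|_{H\to H}\leq 1$ and $f_n\to f$, the second by construction). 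Thus $R_i(A_n)f\to R_i(A)f$ strongly for every $f\in H$.

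The main obstacle is really the nontrivial half of the forward direction, namely showing the strong graph limit is contained in $\mathrm{graph}(A)$: one must convert the hypothesis ``$\varphi_n\to\varphi$ and $A_n\varphi_n\to\psi$'' into membership in $\mathrm{Dom}(A)$, and the trick is to apply $R_i(A_n)$ to $(A_n-i)\varphi_n$ in order to transfer the information to the limiting resolvent $R_i(A)$. The uniform bound \eqref{eq:BoundResolvent} is what makes this transfer legitimate. The rest of the argument is routine algebraic manipulation with resolvents and the fact that testing at a single $\lambda\in\mathbb C\setminus\mathbb R$ suffices.
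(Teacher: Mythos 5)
Your proof is correct. The paper does not reproduce a proof of this theorem (it only cites \cite[Theorem VIII.26]{ReedSimon1980}), and your argument --- testing at $\lambda=i$ via Theorem~\ref{thm:RCenoughInOne}, using the uniform bound \eqref{eq:BoundResolvent}, and transferring information between the graphs and the resolvents through the identities $\varphi_n=R_i(A_n)(A_n-i)\varphi_n$ and $(A_n-i)R_i(A_n)=I$ --- is essentially the standard proof given there.
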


A proof of this result can be found in \cite[Theorem VIII.26]{ReedSimon1980}. Similarly, we can define \emph{weak graph limits}.

\begin{definition}
    Let $A_n$ be self-adjoint operators in a Hilbert space $H$. We say that a pair of elements $(\varphi,\psi)\in H\times H$ is in the \emph{weak graph limit} of $A_n$ if there exist $\varphi_n\in \mathrm{Dom}(A_n)$ such that $\varphi_n \to \varphi$ strongly in $H$ and $A_n\varphi_n \to \psi$ weakly in $H$, as $n\to +\infty$. If the set of such pairs is the graph of a self-adjoint operator $A$, then we say that $A$ is the \emph{weak graph limit} of $A_n$, and we write $A = \underset{n\to +\infty}{\mathrm{w.gr.lim}} \, A_n$.
\end{definition}

An analogous statement to Theorem~\ref{thm:SRCiffSGL} can not hold in the weak sense, because of the following result; see \cite[Problem VIII.26]{ReedSimon1980}.

\begin{theorem} \label{thm:BoundedWRCiffWGL}
    Let $A_n$ and $A$ be uniformly bounded self-adjoint operators. Then $A$ is the weak graph limit of $A_n$ if and only if $A_n$ converges to $A$ in the weak topology.
\end{theorem}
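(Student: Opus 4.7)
The plan is to prove both implications directly, exploiting that uniform boundedness ensures $\mathrm{Dom}(A_n) = \mathrm{Dom}(A) = H$ (so no domain issues arise) and provides uniform control of error terms, while self-adjointness allows moving $A_n$ across the inner product.

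For the implication ``weak graph limit $\Rightarrow$ weak operator convergence'', I would fix arbitrary $\varphi, \psi \in H$. Since $(\varphi, A\varphi) \in \mathrm{graph}(A)$, the hypothesis furnishes $\varphi_n \in H$ with $\varphi_n \to \varphi$ strongly and $A_n \varphi_n \to A\varphi$ weakly. I would then split
\[
\langle A_n \varphi, \psi \rangle_H - \langle A\varphi, \psi \rangle_H = \langle A_n(\varphi - \varphi_n), \psi \rangle_H + \langle A_n \varphi_n - A\varphi, \psi \rangle_H,
\]
and observe that the first summand is bounded in absolute value by $\bigl(\sup_n \|A_n\|_{H\to H}\bigr) \|\varphi - \varphi_n\|_H \|\psi\|_H \to 0$, while the second vanishes by the weak convergence $A_n \varphi_n \to A\varphi$. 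This delivers $\langle A_n \varphi, \psi \rangle_H \to \langle A\varphi, \psi \rangle_H$ for all $\varphi, \psi \in H$, i.e.\ weak operator convergence.

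For the converse, I would show that the weak graph limit coincides with $\mathrm{graph}(A)$. The inclusion $\mathrm{graph}(A) \subseteq \mathrm{w.gr.lim}\, A_n$ is immediate by taking the constant witness $\varphi_n = \varphi$: then $\varphi_n \to \varphi$ strongly and $A_n \varphi \to A\varphi$ weakly by the weak operator convergence hypothesis. For the reverse inclusion, pick $(\varphi, \psi)$ in the weak graph limit with witnesses $\varphi_n \to \varphi$ strongly and $A_n \varphi_n \to \psi$ weakly. For any $\chi \in H$, self-adjointness of $A_n$ gives $\langle A_n \varphi_n, \chi \rangle_H = \langle \varphi_n, A_n \chi \rangle_H$. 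The left side tends to $\langle \psi, \chi \rangle_H$; for the right side I would decompose
\[
\langle \varphi_n, A_n \chi \rangle_H - \langle \varphi, A\chi \rangle_H = \langle \varphi_n - \varphi, A_n \chi \rangle_H + \langle \varphi, (A_n - A)\chi \rangle_H,
\]
where uniform boundedness together with $\varphi_n \to \varphi$ kills the first term and weak operator convergence kills the second. Using self-adjointness of $A$ to rewrite $\langle \varphi, A\chi \rangle_H = \langle A\varphi, \chi \rangle_H$ and letting $\chi$ range over $H$ yields $\psi = A\varphi$, hence $(\varphi, \psi) \in \mathrm{graph}(A)$.

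No substantial obstacle is expected; the proof is essentially inner-product bookkeeping. The crucial interplay is that self-adjointness is what permits the swap $\langle A_n \varphi_n, \chi \rangle_H = \langle \varphi_n, A_n \chi \rangle_H$, exposing $\varphi_n$ on the strongly convergent side and $A_n \chi$ on a side where weak operator convergence of $A_n \to A$ can be invoked; uniform boundedness is precisely what legitimates the mixed strong-bounded product $\langle \varphi_n - \varphi, A_n \chi \rangle_H \to 0$. Without either hypothesis the argument would break down, which is consistent with the fact that the theorem is stated for \emph{uniformly bounded} self-adjoint operators.
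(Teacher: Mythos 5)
Your proof is correct and complete: the forward direction correctly extracts witnesses from $(\varphi,A\varphi)\in\mathrm{graph}(A)$ and uses uniform boundedness to absorb the error $\langle A_n(\varphi-\varphi_n),\psi\rangle_H$, and the converse correctly establishes both inclusions of sets, with the self-adjoint swap $\langle A_n\varphi_n,\chi\rangle_H=\langle\varphi_n,A_n\chi\rangle_H$ doing exactly the work you describe. The paper itself gives no proof of this statement (it only points to \cite[Problem VIII.26]{ReedSimon1980}, which is an exercise), and your argument is the standard solution one would expect there.
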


Indeed, if an analogous statement to Theorem~\ref{thm:SRCiffSGL} were true for the weak graph limit, then by Theorem~\ref{thm:BoundedWRCiffWGL} we would have that, for uniformly bounded self-adjoint operators, weak resolvent convergence would be equivalent to convergence in the weak topology, which contradicts Corollary~\ref{cor:BoundedWeak}. This proves the following statement.

\begin{corollary}
    Let $A_n$ and $A$ be uniformly bounded self-adjoint operators. If $A$ is the weak graph limit of $A_n$ and $A_n$ does not converge to $A$ strongly, then $A_n$ does not converge to $A$ in the weak resolvent sense.
\end{corollary}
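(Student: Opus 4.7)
The plan is to proceed by contradiction and chain together the implications already collected in Sections~\ref{sec:ResolventConvergence} and~\ref{sec:relationGraph}. Since these operators are uniformly bounded and self-adjoint, the hypothesis that $A$ is the weak graph limit of $A_n$ is, by Theorem~\ref{thm:BoundedWRCiffWGL}, the same as saying $A_n \to A$ weakly in $H$; the real working hypothesis is that this weak convergence is strict, i.e.\ not strong. The goal is then to rule out weak resolvent convergence.

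Concretely, suppose for contradiction that $A_n$ converges to $A$ in the weak resolvent sense, meaning $R_\lambda(A_n)u \to R_\lambda(A)u$ weakly for every $u \in H$ and every $\lambda \in \mathbb{C}\setminus \mathbb{R}$. In particular, weak convergence of $R_{\lambda_\star}(A_n)$ to $R_{\lambda_\star}(A)$ holds at a single $\lambda_\star \in \mathbb{C}\setminus\mathbb{R}$, so Theorem~\ref{thm:WRimpliesSR} promotes it to strong convergence at $\lambda_\star$; by Theorem~\ref{thm:RCenoughInOne}, strong resolvent convergence at one point suffices, hence $A_n \overset{\mathrm{s.r.s.}}{\longrightarrow} A$.

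Now the uniform boundedness and self-adjointness of the operators allows us to invoke Theorem~\ref{thm:GenBounded}(2), according to which strong resolvent convergence is equivalent to strong operator convergence in this setting. Consequently $A_n \to A$ strongly in $H$, contradicting the hypothesis that $A_n$ does not converge to $A$ strongly. This contradiction establishes the corollary.

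I do not anticipate a genuine obstacle here: every nontrivial step is packaged in a previously stated theorem, and the argument is precisely the one sketched immediately before the corollary, written as a standalone deduction. The only delicate point worth flagging is that Theorem~\ref{thm:WRimpliesSR} is stated for a single $\lambda_\star \in \mathbb{C}\setminus\mathbb{R}$, so one should appeal to Theorem~\ref{thm:RCenoughInOne} to conclude strong resolvent convergence at \emph{every} such $\lambda$; but this is a one-line observation.
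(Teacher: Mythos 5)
Your argument is correct and is essentially the paper's own: the paper translates the weak graph limit into weak operator convergence via Theorem~\ref{thm:BoundedWRCiffWGL} and then invokes Corollary~\ref{cor:BoundedWeak}, whose proof is exactly the chain (Theorem~\ref{thm:WRimpliesSR} followed by Theorem~\ref{thm:GenBounded}) that you spell out inline. The only cosmetic difference is that you re-derive Corollary~\ref{cor:BoundedWeak} rather than citing it; also note that weak resolvent convergence already gives weak convergence of the resolvents at \emph{every} $\lambda\in\mathbb{C}\setminus\mathbb{R}$, so Theorem~\ref{thm:WRimpliesSR} can be applied at each such $\lambda$ directly, making the appeal to Theorem~\ref{thm:RCenoughInOne} optional.
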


\section{Relation with $G$- and $\Gamma$-convergence} \label{sec:relationGamma}

We conclude this survey on the resolvent convergence exploring its relation with \emph{$G$-} and \emph{$\Gamma$-convergence}. In the literature ---see, for example, \cite{Braides2002,DalMaso1993}---, \emph{$G$-convergence} of positive self-adjoint operators and \emph{$\Gamma$-convergence} of their associated positive quadratic forms is usually formulated in the context of real Hilbert spaces. Nevertheless, many of the proofs developed in the real Hilbert space setting adapt directly to the complex Hilbert space setting. In \cite[Section 2]{Bedoya2014} it is described the modifications needed to generalize the pertinent results in \cite{DalMaso1993} to the complex Hilbert space setting. As defined in \cite[Definition~1]{Bedoya2014}, the notion of $\Gamma$-convergence is the same as in \cite{DalMaso1993}, and states as follows.

\begin{definition} \label{def:GammaConv}
    Let $Q_n:H\to \mathbb R\cup \{\infty\}$, for $n\in\mathbb N$, and $Q_\infty:H\to \mathbb R\cup \{\infty\}$ be functionals. We say that $Q_n$ \emph{$\Gamma$-converges in the strong topology} to $Q_\infty$, as $n\to+\infty$, if
    \begin{enumerate}
        \item for every $u\in H$ and every $u_n$ converging to $u$ strongly in $H$, it holds that $Q_\infty(u) \leq \underset{n\to+\infty}{\liminf} \, Q_n(u_n)$, and
        \item for every $u\in H$ there exists a sequence $u_n$ converging to $u$ strongly in $H$ such that $Q_\infty(u) = \underset{n\to+\infty}{\lim} \, Q_n(u_n)$.
    \end{enumerate}
\end{definition}

\begin{remark}
    Two remarks are in order about Definition~\ref{def:GammaConv}.
     \begin{enumerate}
         \item With an abuse of notation, we alternatively say that $Q_n$ \emph{strongly $\Gamma$-converges} to $Q_\infty$, and we write $Q_n \overset{\mathrm{S \Gamma}}{\underset{n\to+\infty}{\longrightarrow}} Q_\infty$.
         \item There exists a notion of ``weak" $\Gamma$-convergence, the definition of which is not analogous to Definition~\ref{def:GammaConv}; see \cite[equation (1.13) and the paragraph below it]{Braides2002}. However, assuming, in addition, that there exists a constant $c>0$ such that $Q_n(v) \geq c \|v\|_H^2$ for every $v\in H$ and every $n\in\mathbb N \cup \{\infty\}$, if in Definition~\ref{def:GammaConv} we replace the strong convergence of $u_n$ by weak convergence, then we get a characterization of the so-called \emph{$\Gamma$-convergence in the weak topology} of $Q_n$ to $Q_\infty$; see \cite[Proposition 8.10]{DalMaso1993}. With an abuse of notation, we alternatively say that $Q_n$ \emph{weakly $\Gamma$-converges} to $Q_\infty$, and we write $Q_n \overset{\mathrm{W \Gamma}}{\underset{n\to+\infty}{\longrightarrow}} Q_\infty$.
     \end{enumerate}
\end{remark}

Similarly, the notion of $G$-convergence of equi-coercive self-adjoint operators is the same as in \cite[Definition 13.3]{DalMaso1993}.

\begin{definition}
    Given $c>0$, let $A_n$ and $A_\infty$ be self-adjoint operators such that $\langle A_n u, u\rangle_H \geq c\|u\|_H^2$ for every $u\in \mathrm{Dom}(A_n)$ and every $n\in \mathbb N\cup \{\infty\}$. We say that $A_n$ \emph{strongly $G$-converges} to $A_\infty$, as $n\to+\infty$, if for every $f\in H$ the inverse $A_n^{-1}f$ converges to $A_\infty^{-1}f$ strongly in $H$, and we write $A_n \overset{\mathrm{S G}}{\underset{n\to+\infty}{\longrightarrow}} A_\infty$. If instead of $A_n^{-1}f$ converging to $A_\infty^{-1}f$ strongly in $H$ one considers weak convergence in $H$, then we say that $A_n$ \emph{weakly $G$-converges} to $A_\infty$, and we write $A_n \overset{\mathrm{W G}}{\underset{n\to+\infty}{\longrightarrow}} A_\infty$.
\end{definition}

In view of Theorem~\ref{thm:RCenoughInOne}, if $A_n$ and $A_\infty$ are non-negative self-adjoint operators, then $A_n$ converges to $A_\infty$ in the strong resolvent sense if and only if $A_n+\lambda I$ strongly $G$-converges to $A_\infty+\lambda I$, for every $\lambda>0$.

The following theorem relates the resolvent with $G$- and $\Gamma$-convergence. It is the complex version of \cite[Theorem 13.5 and Corollary 13.7]{DalMaso1993}, a proof of which can be found in \cite[Section 2]{Bedoya2014}. In the theorem, the quadratic form associated to a self-adjoint operator $A$ is the form $Q:H\to \mathbb R\cup\{\infty\}$ defined by
\begin{equation*}
    Q(u) := \begin{cases}
        \langle A u, u \rangle_H & \text{if } u\in \mathrm{Dom}(A), \\
        \infty & \text{otherwise}.
    \end{cases}
\end{equation*}

\begin{theorem} \label{thm:GammaGResolvent}
    Let $A_n$ and $A_\infty$ be self-adjoint operators such that their associated quadratic forms satisfy $Q_n(u)\geq c\|u\|_H^2$ for every $u\in H$ and every $n\in \mathbb N\cup \{\infty\}$, for some $c> 0$. Then, $Q_n$ weakly $\Gamma$-converges to $Q_\infty$ if and only if $A_n$ weakly $G$-converges to $A_\infty$. Moreover, the following statements are equivalent.
    \begin{enumerate}
        \item $A_n$ converges in the strong resolvent sense to $A_\infty$.
        \item $A_n$ strongly $G$-converges to $A_\infty$.
        \item $Q_n$ both strongly and weakly $\Gamma$-converges to $Q_\infty$.
    \end{enumerate}
\end{theorem}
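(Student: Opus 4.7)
My plan is to decouple the implications and exploit that coercivity places $0$ in the resolvent set of every $A_n$ and $A_\infty$, so that $G$-convergence is literally resolvent convergence at a single real point.

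First, I would dispatch (1)$\Leftrightarrow$(2) immediately. The bound $Q_n(u) \geq c\|u\|_H^2$ forces $\sigma(A_n), \sigma(A_\infty) \subset [c, +\infty)$ by the spectral theorem, so $A_n^{-1} = R_0(A_n)$ and $A_\infty^{-1} = R_0(A_\infty)$. Strong $G$-convergence is therefore exactly strong convergence of these resolvents at $\lambda_\star = 0 \in \mathbb C \setminus (\sigma(A_\infty)\cup\bigcup_n \sigma(A_n))$, and Theorem~\ref{thm:RCenoughInOne} upgrades this automatically to strong convergence at every $\lambda \in \mathbb C \setminus \mathbb R$, i.e.\ to strong resolvent convergence. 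The same reduction shows that weak $G$-convergence is equivalent to weak convergence of $R_0(A_n)f$ for every $f \in H$.

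The decisive step, and the one I expect to be the main obstacle, is the standalone equivalence weak $\Gamma \Leftrightarrow$ weak $G$. For weak $\Gamma \Rightarrow$ weak $G$, I would invoke the classical principle that $\Gamma$-convergence together with equi-coercivity forces convergence of minimizers: $A_n^{-1}f$ is the unique minimizer of the strictly convex, coercive functional $v \mapsto \frac{1}{2}Q_n(v) - \mathrm{Re}\langle f,v \rangle_H$, and weak $\Gamma$-convergence then yields $A_n^{-1}f \rightharpoonup A_\infty^{-1}f$. The converse is subtler: for a weak recovery sequence at $u \in \mathrm{Dom}(A_\infty)$ one may take $u_n := A_n^{-1}(A_\infty u)$ and use weak $G$-convergence to see $Q_n(u_n) = \langle A_\infty u, u_n \rangle_H \to Q_\infty(u)$, but the weak $\Gamma$-liminf inequality requires more substantial spectral-theoretic arguments (e.g.\ via Moreau--Yosida regularization), to be adapted from \cite[Chapter 13]{DalMaso1993} along the lines of \cite[Section~2]{Bedoya2014}.

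Once this weak equivalence is in place, (2)$\Rightarrow$(3) is routine: strong $G$ implies weak $G$, hence weak $\Gamma$; the strong $\Gamma$-liminf is implied by the weak one; and for the strong recovery sequence at $u \in \mathrm{Dom}(A_\infty)$ I take again $u_n := A_n^{-1}(A_\infty u)$, which now converges strongly to $u$ by strong $G$-convergence, while $Q_n(u_n) \to Q_\infty(u)$ as before (for $u \notin \mathrm{Dom}(A_\infty)$ any strong approximant works by liminf). For (3)$\Rightarrow$(2), given $f \in H$, the weak equivalence gives $u_n := A_n^{-1}f \rightharpoonup u := A_\infty^{-1}f$, and strong $\Gamma$-convergence supplies a recovery sequence $\tilde u_n \to u$ with $Q_n(\tilde u_n) \to Q_\infty(u)$. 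Minimality of $u_n$ in $v \mapsto Q_n(v) - 2\mathrm{Re}\langle f,v\rangle_H$ yields $\limsup Q_n(u_n) \leq Q_\infty(u)$, matched by the weak liminf to give $Q_n(u_n) \to Q_\infty(u)$. Using $A_n u_n = f$, self-adjointness, and $\langle f,u\rangle_H = Q_\infty(u)$,
\[
\langle A_n(u_n - \tilde u_n), u_n - \tilde u_n \rangle_H = Q_n(u_n) + Q_n(\tilde u_n) - 2\mathrm{Re}\langle f, \tilde u_n \rangle_H \longrightarrow 0,
\]
and coercivity $A_n \geq cI$ then forces $\|u_n - \tilde u_n\|_H \to 0$, so $u_n \to u$ strongly, which is strong $G$-convergence.
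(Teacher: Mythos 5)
The paper itself gives no proof of Theorem~\ref{thm:GammaGResolvent}: it is stated as the complex version of \cite[Theorem 13.5 and Corollary 13.7]{DalMaso1993} with the adaptation carried out in \cite[Section 2]{Bedoya2014}. Measured against that, your attempt is more self-contained than the text, and the parts you actually prove are correct and follow the standard route. The reduction of $G$-convergence to resolvent convergence at $\lambda_\star=0$ (coercivity forces $\sigma(A_n),\sigma(A_\infty)\subseteq[c,+\infty)$, so Theorem~\ref{thm:RCenoughInOne} applies) is exactly the observation the paper makes just before the theorem, and settles (1)$\Leftrightarrow$(2). The implication weak $\Gamma\Rightarrow$ weak $G$ via equi-coercivity, the weakly continuous perturbation $-2\mathrm{Re}\langle f,\cdot\rangle_H$, and convergence of the unique minimizers $A_n^{-1}f$ is sound. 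Your (3)$\Rightarrow$(2) step is the nicest part: squeezing $Q_n(u_n)\to Q_\infty(u)$ between the minimality of $u_n$ tested on the recovery sequence and the weak liminf inequality, and then using $\langle A_n(u_n-\tilde u_n),u_n-\tilde u_n\rangle_H\geq c\|u_n-\tilde u_n\|_H^2$ to upgrade weak to strong convergence, is precisely the mechanism in Dal Maso's proof.

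The genuine gap is the one you flag yourself: the implication weak $G\Rightarrow$ weak $\Gamma$ (in particular the weak $\Gamma$-liminf inequality), on which both the standalone equivalence and your (2)$\Rightarrow$(3) rest, is simply deferred to the references, so the central point of the theorem is not proved. Moreover, with the paper's literal definition of $Q_n$ (value $\langle A_nu,u\rangle_H$ on the operator domain $\mathrm{Dom}(A_n)$ and $+\infty$ elsewhere) this step cannot be closed at all: a $\Gamma$-limit in either topology is automatically lower semicontinuous, whereas this $Q_\infty$ fails to be lower semicontinuous whenever $\mathrm{Dom}(A_\infty)$ is a proper subset of the form domain $\mathrm{Dom}(A_\infty^{1/2})$. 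Already the constant sequence $A_n=A_\infty=-\Delta_{\mathrm D}$ is a counterexample, since the $\Gamma$-limit is then the relaxed form $\|\nabla u\|_{L^2(\Omega)}^2$ on all of $H^1_0(\Omega)$, not $+\infty$ on $H^1_0(\Omega)\setminus H^2(\Omega)$. The statement — and hence your proof — must be read with $Q_n$ the closed form $u\mapsto\|A_n^{1/2}u\|_H^2$ on $\mathrm{Dom}(A_n^{1/2})$, as in \cite{DalMaso1993} and \cite{Bedoya2014}; with that reading the missing liminf inequality does follow, via the monotone resolvent (Yosida) approximation $Q(u)=\sup_{\varepsilon>0}\langle A(I+\varepsilon A)^{-1}u,u\rangle_H$, each term of which is weakly lower semicontinuous and stable under the $G$-convergence of the resolvents. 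Your proved steps survive this reinterpretation, since they only evaluate $Q_\infty$ at points of $\mathrm{Dom}(A_\infty)$, but as written the argument is incomplete exactly where the theorem is nontrivial.
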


We point out that studying the convergence of the quadratic forms associated to self-adjoint operators is of special interest, because by the min-max Theorem \cite[Theorem~4.14]{Teschl2014} the eigenvalues of a self-adjoint operator $A$ which are below its essential spectrum are given by the min-max levels
\begin{equation*}
    \underset{\substack{ V\subset \mathrm{Dom}(A) \\ \mathrm{dim}(V)=k }}{\inf} \, \, \underset{u\in V\setminus\{0\}}{\sup} \, \dfrac{\langle A u, u \rangle_H}{\|u\|_H^2}, \quad \text{where } k \in \mathbb N.
\end{equation*}

\begin{remark}
    Let $\mu>0$. In \cite[Theorem 13.6]{DalMaso1993} it is shown ---in the real Hilbert space setting--- the equivalence of the following conditions, assuming that $A_n$ and $A_\infty$ are self-adjoint operators whose associated quadratic forms satisfy $Q_n(u)\geq 0$ for every $u\in H$ and every $n\in \mathbb N\cup \{\infty\}$:
    \begin{enumerate}
        \item $A_n$ converges in the strong resolvent sense to $A_\infty$.
        \item $(A_n+\mu)$ strongly $G$-converges to $(A_\infty+\mu)$.
        \item $(Q_n+\mu\|\cdot\|_H^2)$ both strongly and weakly $\Gamma$-converges to $(Q_\infty+\mu\|\cdot\|_H^2)$.
    \end{enumerate}
    This also holds in the complex Hilbert space setting. Indeed, by Corollary~\ref{cor:shiftResolvent}, point $1$ is equivalent to $(A_n+\mu)$ converging in the strong resolvent sense to $(A_\infty+\mu)$, and this is in turn equivalent both to points $2$ and $3$, by Theorem~\ref{thm:GammaGResolvent}.
\end{remark}

It is worth noting that, by Theorem~\ref{thm:GammaGResolvent}, the strong resolvent convergence of self-adjoint operators is not equivalent to the strong $\Gamma$-convergence of their quadratic forms. The main reason for this is the fact that, in general, the weak and strong $\Gamma$-limits of a same sequence may differ; see \cite[Example 6.6]{DalMaso1993}. The following result provides a scenario in which the strong resolvent convergence is equivalent to all the notions of convergence that have been presented in this section.

\begin{theorem} \label{thm:WeakStrongGamma}
    Assume that $(h, \langle \cdot, \cdot \rangle_h, \|\cdot\|_h)$ is a compactly embedded Hilbert subspace of $H$. Let $A_n$ and $A_\infty$ be self-adjoint operators in $H$ such that
    \begin{enumerate}
        \item $\mathrm{Dom}(A_n) \subseteq h$ for every $n\in \mathbb N \cup \{\infty\}$, and
        \item their associated quadratic forms are equi-coercive in $h$, namely, they satisfy 
        \begin{equation*}
            Q_n(u)\geq c\|u\|_h^2 \quad \text{for every } u\in h \text{ and every } n\in \mathbb N\cup \{\infty\}, \text{ for some } c> 0. 
        \end{equation*}
    \end{enumerate}
    Then, the following statements are equivalent.
    \begin{enumerate}
        \item $A_n$ converges in the strong resolvent sense to $A_\infty$.
        \item $A_n$ strongly $G$-converges to $A_\infty$.
        \item $Q_n$ strongly $\Gamma$-converges to $Q_\infty$.
        \item $Q_n$ weakly $\Gamma$-converges to $Q_\infty$.
        \item $A_n$ weakly $G$-converges to $A_\infty$.
    \end{enumerate}
\end{theorem}

A proof of this theorem can be found in \cite[Theorem 13.12]{DalMaso1993}, where the Hilbert spaces are assumed to be real. Nevertheless, the real structure is only used in the proof of \cite[Theorem 13.12]{DalMaso1993} to invoke Theorem~\ref{thm:GammaGResolvent}, which also holds in the complex Hilbert space setting.

The scenario described by the hypothesis of Theorem~\ref{thm:WeakStrongGamma} is usually found in the context of uniform elliptic PDEs in divergence form. The reader can find one such concrete example in Section~\ref{sec:RobinLaplacian}, where we study the resolvent convergence in $H=L^2(\Omega)$ of the Robin Laplacian to the Dirichlet Laplacian as the Robin boundary condition degenerates to the Dirichlet boundary condition; see the notation at the beginning of Section~\ref{sec:RobinLaplacian}. Indeed, the quadratic forms associated to the weak formulations of both the Robin and the Dirichlet Laplacians are equi-coercive in the compactly embedded Hilbert subspace $h=H^1(\Omega)$, on which the operators are defined.

The reader can also find in Section~\ref{sec:Graph} a diagram summarizing the results involving the resolvent convergence that we stated in this survey; see Figure~\ref{fig:graphResolvent}.

\section{Diagram summarizing the resolvent convergence} \label{sec:Graph}

\begin{figure}[!ht]
\centering
\resizebox{1\textwidth}{!}{
\begin{tikzpicture}
\tikzstyle{every node}=[font=\normalsize]

\node [font=\normalsize] at (8.75,15.15) {$(A-\lambda)^{-1} = \,$ resolvent of $A$ at $\lambda\in\rho(A), \quad \sigma(A) = \mathbb C \setminus \rho(A)$};
\node [font=\normalsize] at (8.75,14.65) {$\sigma(A) = \sigma_{\mathrm{d}}(A)\sqcup\sigma_{\mathrm{ess}}(A)$};
\node [font=\normalsize] at (7.47,14) {$\sigma_\mathrm{d}(A) = $};
\node [font=\normalsize] at (9.65,14.15) {isolated eigenvalues};
\node [font=\normalsize] at (9.65,13.75) {of finite multiplicity};

\node [font=\normalsize] at (9,11.75) {$A_n\underset{n\to+\infty}{\overset{\mathrm{n.r.s.}}{\longrightarrow}} A_\infty$};
\draw [-{Implies},double] (9,11.25) -- (9,10.5);

\node [font=\normalsize] at (9,10) {$A_n\underset{n\to+\infty}{\overset{\mathrm{s.r.s.}}{\longrightarrow}} A_\infty$};

\draw [{Implies}-{Implies},double] (8.75,9.5) -- (7.5,8.75);
\node [font=\normalsize] at (7.25,8.25) {$A_n\underset{n\to+\infty}{\overset{\mathrm{w.r.s.}}{\longrightarrow}} A_\infty$};
\draw [{Implies}-{Implies},double] (9.25,9.5) -- (10.5,8.75);
\node [font=\normalsize] at (10.75,8.25) {$A_\infty = \underset{n\to +\infty}{\mathrm{s.gr.lim}} \, A_n$};

\draw [-{Implies},double] (7.5,7.75) -- (8.75,7);
\draw [-{Implies},double] (10.5,7.75) -- (9.25,7);
\node [font=\normalsize] at (9,6.5) {$A_\infty = \underset{n\to +\infty}{\mathrm{w.gr.lim}} \, A_n$};

\draw [ dashed] (0.75,13.75) rectangle  (5.7,7.5);
\begin{scope}[shift={(-5.75, 0)}]

    \node [font=\normalsize] at (9,13.25) {If, in addition, $A_n$ are uniformly};
    \node [font=\normalsize] at (9,12.75) {bounded for $n\in \mathbb N \cup\{\infty\}$};

    \node [font=\normalsize] at (9,11.75) {$A_n \underset{n\to+\infty}{\longrightarrow} A_\infty$ in norm};
    \draw [-{Implies},double] (9,11.25) -- (9,10.5);

    \node [font=\normalsize] at (9,10) {$A_n \underset{n\to+\infty}{\longrightarrow} A_\infty$ strongly};
    \draw [-{Implies},double] (9,9.5) -- (9,8.75);
    
    \node [font=\normalsize] at (9,8.25) {$A_n \underset{n\to+\infty}{\longrightarrow} A_\infty$ weakly};
\end{scope}

\draw [{Implies}-{Implies},double] (5.25,11.75) -- (7.75,11.75);
\draw [{Implies}-{Implies},double] (5.25,10) -- (7.75,10);
\draw [{Implies}-{Implies},double] (7.35,6.65) -- (3.25,7.85);
\draw [-{Implies},double] (6.15,8.25) -- (5,8.25);

\draw [ dashed] (12.25,11.35) rectangle  (19.75,6.5);
\begin{scope}[shift={(6, 0)}]

    \node [font=\normalsize] at (10,10.75) {If, in addition, $\langle A_n \cdot, \cdot\rangle \geq c \|\cdot\|^2$ for $\, \begin{matrix}
        n\in \mathbb N\cup \{\infty\} \\[0.05pt]
        c > 0
    \end{matrix}$};

    \node [font=\normalsize] at (8.5,10) {$A_n \underset{n\to+\infty}{\overset{SG}{\longrightarrow}} A_\infty$};

    \draw [{Implies}-{Implies},double] (8.5,9.4) -- (8.5,8.6);

    \node [font=\normalsize] at (8.75,7.5) {$\begin{cases}
        \langle A_n \cdot, \cdot\rangle \overset{\mathrm{S \Gamma}}{\underset{n\to+\infty}{\longrightarrow}} \langle A_\infty\cdot, \cdot\rangle, \text{ and } \\[7pt]
        \langle A_n \cdot, \cdot\rangle \overset{\mathrm{W \Gamma}}{\underset{n\to+\infty}{\longrightarrow}} \langle A_\infty\cdot, \cdot\rangle
    \end{cases}$};

    \draw [{Implies}-{Implies},double] (10,7.1) -- (11.35,7.1);
    \node [font=\normalsize] at (12.5,7.1) {$A_n \underset{n\to+\infty}{\overset{WG}{\longrightarrow}} A_\infty$};
\end{scope}

\draw [{Implies}-{Implies},double] (10.25,10) -- (13.25,10);

\draw [-{Implies},double] (9.25,10.5) -- (12.65,12);
\node [font=\normalsize] at (14.45,12) {$\sigma(A_\infty) \subseteq \underset{n\to+\infty}{\lim} \sigma(A_n)$};

\draw [-{Implies},double] (9.25,12.25) -- (12.5,13.5);
\node [font=\normalsize] at (15,13.5) {$\begin{cases}
\sigma(A_\infty) = \underset{n\to+\infty}{\lim} \, \sigma(A_n), \\
\sigma_{\mathrm{d}}(A_\infty) = \underset{n\to+\infty}{\lim} \sigma_{\mathrm{d}}(A_n), \text{ and}  \\
\sigma_{\mathrm{ess}}(A_\infty) = \underset{n\to+\infty}{\lim} \, \sigma_{\mathrm{ess}}(A_n)
\end{cases}$};
\end{tikzpicture}
}

\caption{Diagram of the relations between the different notions of convergence of densely defined, self-adjoint operators $A_n$, $n\in\mathbb N \cup\{\infty\}$, in a separable, complex Hilbert space $(H,\langle\cdot,\cdot\rangle, \|\cdot\|)$.}
\label{fig:graphResolvent}
\end{figure}
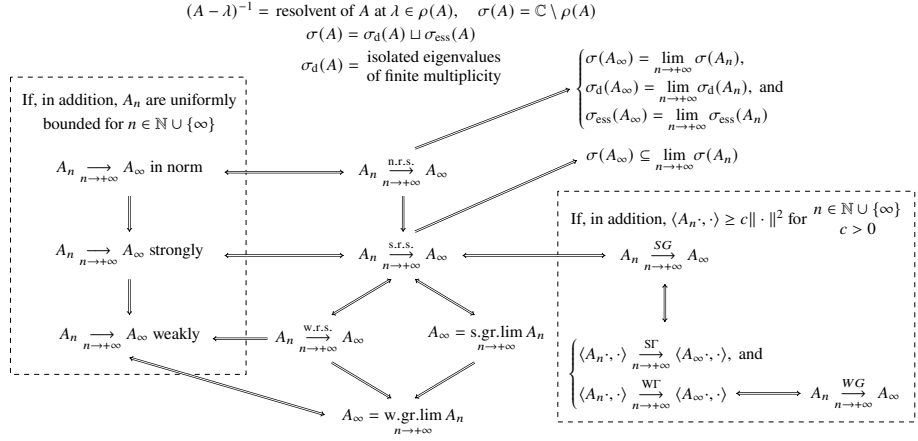

\section{Example: resolvent convergence of the Robin Laplacian} \label{sec:RobinLaplacian}

We conclude this chapter illustrating the resolvent convergence through an example. To this end, we first recall the basic notation regarding Sobolev spaces that we will use; see \cite[Chapter 9 and Section 11.4]{Brezis2011}, \cite[Chapter 5]{Evans2010}, or \cite[Chapter 4]{Taylor2011}.

In the sequel, $\Omega$ denotes a bounded domain in $\mathbb R^N$ with $C^2$ boundary, $N\geq 2$. We denote by $L^2(\Omega)$ the Hilbert space of functions $\varphi:\Omega\to\mathbb C$ endowed with the scalar product $\langle \cdot,\cdot \rangle_{L^2(\Omega)}$ and the associated norm $\|\cdot\|_{L^2(\Omega)}$, respectively defined as
\begin{equation*}
    \langle \varphi,\psi\rangle_{L^2(\Omega)}:=\int_{\Omega} \varphi \, \overline\psi \,dx \quad\text{and}\quad
    \|\varphi\|_{L^2(\Omega)}:=\langle \varphi,\varphi\rangle_{L^2(\Omega)}^{1/2}.
\end{equation*}
For $k\in \mathbb N$, we denote by $H^k(\Omega)$ the Sobolev space of functions in $L^2(\Omega)$ with weak partial derivatives up to order $k$ in $L^2(\Omega)$, and $H_0^k(\Omega)$ denotes the closure with respect to the $H^k(\Omega)$ norm of the set of smooth functions compactly supported in $\Omega$. 

Similarly, $L^2(\partial\Omega)$ denotes the Hilbert space of functions $\varphi:\partial\Omega\to\mathbb C$ endowed with the scalar product $\langle \cdot,\cdot \rangle_{L^2(\partial\Omega)}$ and the associated norm $\|\cdot\|_{L^2(\partial\Omega)}$, respectively defined as
\begin{equation*}
    \langle \varphi,\psi\rangle_{L^2(\partial\Omega)}:=\int_{\partial\Omega} \varphi \, \overline\psi \,d\upsigma \quad\text{and}\quad
    \|\varphi\|_{L^2(\partial\Omega)}:=\langle \varphi,\varphi\rangle_{L^2(\partial\Omega)}^{1/2}.
\end{equation*}
Here, $\partial\Omega$ denotes the boundary of $\Omega$ and $\upsigma$ denotes the surface measure on $\partial\Omega$. We denote by $H^{1/2}(\partial \Omega)$ the fractional Sobolev space of functions $\varphi\in L^2(\partial\Omega)$ such that 
\begin{equation*}
    \|\varphi\|_{H^{1/2}(\partial \Omega)}:= \Big( \int_{\partial\Omega} |\varphi|^2\,d\upsigma + \int_{\partial\Omega}\int_{\partial\Omega}\frac{|\varphi(x)-\varphi(y)|^2}{|x-y|^{2}} \,d\upsigma(y)\,d\upsigma(x) \Big)^{1/2} < +\infty.
\end{equation*}
We shall omit the measures of integration when no confusion arises.

The continuous dual space of $H^{1/2}(\partial \Omega)$ is denoted by $H^{-1/2}(\partial \Omega)$. The action of $\varphi \in H^{-1/2}(\partial \Omega)$ on $\psi \in H^{1/2}(\partial \Omega)$ is denoted by the pairing $\langle \varphi, \psi \rangle_{H^{-1/2}(\partial\Omega), H^{1/2}(\partial\Omega)}$, and the norm in $H^{-1/2}(\partial \Omega)$ is
\begin{equation*}
    \|\varphi\|_{H^{-1/2}(\partial \Omega)}:= 
        { \sup_{\|\psi\|_{H^{1/2}(\partial \Omega)}\leq1}} 
        \langle \varphi, \psi \rangle_{H^{-1/2}(\partial\Omega), H^{1/2}(\partial\Omega)}.
\end{equation*}
Recall that, whenever $\varphi \in L^2(\partial\Omega) \subset H^{-1/2}(\partial\Omega)$ and $\psi \in H^{1/2}(\partial\Omega) \subset L^2(\partial\Omega)$, the pairing satisfies
\begin{equation} \label{eq:Brezis}
    \langle \varphi, \psi \rangle_{H^{-1/2}(\partial\Omega), H^{1/2}(\partial\Omega)} = \overline{ \langle \varphi, \psi \rangle}_{L^2(\partial\Omega)};
\end{equation}
see, for example, \cite[Remark 3 in Section 5.2, and Section 11.4]{Brezis2011}. The reason why there is a complex conjugate in \eqref{eq:Brezis} is that we defined $\langle\cdot,\cdot\rangle_{L^2(\partial\Omega)}$ to be linear with respect to the first entry.

We want to study the convergence in a resolvent sense of the so-called \emph{Robin Laplacian}, defined for $a\in(0,+\infty)$ by
\begin{eqnarray*}
    \mathrm{Dom}(-\Delta_a) &:=& \{u\in H^2(\Omega): \partial_\nu u +au =0 \text{ in } H^{1/2}(\partial \Omega) \}, \\
    -\Delta_a u &:=& -\Delta u \quad \text{for every } u\in \mathrm{Dom}(-\Delta_a),
\end{eqnarray*}
as the boundary parameter $a$ moves in $(0,+\infty)$. Here, $\Delta$ denotes the Laplacian and $\nu$ denotes the outward unit normal vector to $\partial\Omega$. Heuristically, the operator formally obtained taking $a=0$ is the so-called \emph{Neumann Laplacian}, defined by
\begin{eqnarray*}
    \mathrm{Dom}(-\Delta_\mathrm{N}) &:=& \{u\in H^2(\Omega): \partial_\nu u =0 \text{ in } H^{1/2}(\partial \Omega) \}, \\
    -\Delta_\mathrm{N} u &:=& -\Delta u \quad \text{for every } u\in \mathrm{Dom}(-\Delta_\mathrm{N}).
\end{eqnarray*}
Instead, the operator formally obtained taking $a=+\infty$ is the so-called \emph{Dirichlet Laplacian}, defined by
\begin{eqnarray*}
    \mathrm{Dom}(-\Delta_\mathrm{D}) &:=& H^2(\Omega)\cap H^1_0(\Omega), \\
    -\Delta_\mathrm{D} u &:=& -\Delta u \quad \text{for every } u\in \mathrm{Dom}(-\Delta_\mathrm{D}).
\end{eqnarray*}
As we shall see later, the resolvent convergence formalizes these heuristics. 

It is well-known that the above operators are self-adjoint in $L^2(\Omega)$. In particular, by Remark~\ref{rmk:aboutDef}.\ref{item:resolventNoEmpty} the resolvent of $-\Delta_a$ at every $\lambda\in \mathbb C\setminus\mathbb R$ is a well defined bounded operator from $L^2(\Omega)$ to $L^2(\Omega)$, for every $a\in (0,+\infty)$. One might have the impression that working with the resolvent $(-\Delta_a-\lambda)^{-1}$ could be complicated, because it is the inverse of an operator. However, given $f\in L^2(\Omega)$, the resolvent $(-\Delta_a-\lambda)^{-1} f\in \mathrm{Dom}(-\Delta_a)$ is by definition the (unique) solution $\varphi_a$ to the elliptic boundary value problem
\begin{equation} \label{eq:SolutionRobin}
    \begin{cases}
        (-\Delta -\lambda) \varphi_a = f & \text{in } L^2(\Omega), \\
        \partial_\nu \varphi_a +a \varphi_a =0 & \text{in } H^{1/2}(\partial \Omega).
    \end{cases}
\end{equation}
Hence, applying the modern theory of elliptic PDEs ---see \cite[Chapter 9]{Brezis2011}, \cite[Chapter 6]{Evans2010} or \cite[Chapter 5]{Taylor2011}--- we can extract a lot of information from $(-\Delta_a-\lambda)^{-1} f$. The following result is an illustration of this, and shows that the resolvent $(-\Delta_a-\lambda)^{-1}$ is actually a bounded operator from $L^2(\Omega)$ to $H^1(\Omega)$.

\begin{proposition} \label{prop:boundedResolvent}
    For every $a>0$ and $\lambda\in \mathbb C\setminus \mathbb R$, there exists a constant $C_{\lambda}>0$ depending only on $\lambda$ such that 
    \begin{equation*}
        \|(-\Delta_a-\lambda)^{-1}f\|_{H^1(\Omega)} \leq C_{\lambda} \|f\|_{L^2(\Omega)}
    \end{equation*}
    for every $f\in L^2(\Omega)$. As a consequence, $(-\Delta_a-\lambda)^{-1}$ is a compact operator from $L^2(\Omega)$ to $H^1(\Omega)$, and $\sigma(-\Delta_a)$ is purely discrete.
\end{proposition}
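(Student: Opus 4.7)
The plan is to derive an $H^1$ energy estimate directly from the weak formulation of the Robin boundary value problem \eqref{eq:SolutionRobin}, and then to extract compactness via the Rellich--Kondrachov theorem. First I would fix $a>0$, $\lambda\in\mathbb C\setminus\mathbb R$, $f\in L^2(\Omega)$, and set $\varphi:=(-\Delta_a-\lambda)^{-1}f\in H^2(\Omega)$. Multiplying $-\Delta\varphi-\lambda\varphi=f$ by $\overline{\varphi}$, integrating over $\Omega$, and using Green's identity together with the Robin condition $\partial_\nu\varphi=-a\varphi$ on $\partial\Omega$, I would arrive at the identity
\begin{equation*}
\int_\Omega|\nabla\varphi|^2\,dx+a\int_{\partial\Omega}|\varphi|^2\,d\upsigma-\lambda\int_\Omega|\varphi|^2\,dx=\int_\Omega f\overline{\varphi}\,dx.
\end{equation*}

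Taking imaginary parts (every term on the left except $-\lambda\int|\varphi|^2$ is real and non-negative) recovers the $L^2$ estimate $\|\varphi\|_{L^2(\Omega)}\leq|\mathrm{Im}\lambda|^{-1}\|f\|_{L^2(\Omega)}$ already encoded in \eqref{eq:BoundResolvent}. Taking real parts yields instead $\|\nabla\varphi\|_{L^2(\Omega)}^2+a\|\varphi\|_{L^2(\partial\Omega)}^2=\mathrm{Re}(\lambda)\|\varphi\|_{L^2(\Omega)}^2+\mathrm{Re}\langle f,\varphi\rangle_{L^2(\Omega)}$. The crucial point here is the sign condition $a>0$: the boundary term sits on the favorable side of the identity and can be dropped for free, producing $\|\nabla\varphi\|_{L^2(\Omega)}^2\leq|\lambda|\|\varphi\|_{L^2(\Omega)}^2+\|f\|_{L^2(\Omega)}\|\varphi\|_{L^2(\Omega)}$; combined with the preceding $L^2$ bound this supplies a constant $C_\lambda>0$ depending only on $\lambda$ with $\|\varphi\|_{H^1(\Omega)}\leq C_\lambda\|f\|_{L^2(\Omega)}$. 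This is the step I expect to be the main obstacle: were the sign of $a$ reversed, the boundary term would have to be \emph{controlled} rather than discarded, and uniformity in $a$ would be lost.

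For the compactness claim I would invoke the standard elliptic regularity estimate for the Robin problem on the $C^2$-domain $\Omega$, which upgrades the previous bound to $\|(-\Delta_a-\lambda)^{-1}f\|_{H^2(\Omega)}\leq C(a,\lambda)\|f\|_{L^2(\Omega)}$ (the constant is now permitted to depend on $a$). Composing this bounded map $L^2(\Omega)\to H^2(\Omega)$ with the compact embedding $H^2(\Omega)\hookrightarrow H^1(\Omega)$ provided by Rellich--Kondrachov yields compactness of $(-\Delta_a-\lambda)^{-1}$ as an operator $L^2(\Omega)\to H^1(\Omega)$. Composing further with the compact embedding $H^1(\Omega)\hookrightarrow L^2(\Omega)$ shows that $-\Delta_a$ has compact resolvent on $L^2(\Omega)$; being moreover self-adjoint, the classical theorem on self-adjoint operators with compact resolvent delivers that $\sigma(-\Delta_a)$ consists of a real sequence of isolated eigenvalues of finite multiplicity with no finite accumulation point, that is, it is purely discrete.
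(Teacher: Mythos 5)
Your derivation of the $H^1$ bound is essentially the paper's own argument: the same energy identity obtained by pairing the equation with $\overline{\varphi}$ and using the Robin condition, the same observation that $a>0$ puts the boundary term on the favorable side so it can be dropped, and the same combination with the bound $\|\varphi\|_{L^2(\Omega)}\leq|\mathrm{Im}\,\lambda|^{-1}\|f\|_{L^2(\Omega)}$ (you take real and imaginary parts separately where the paper simply takes the modulus of the left-hand side; this is immaterial). The only genuine divergence is in the compactness step. The paper composes the bounded map $L^2(\Omega)\to H^1(\Omega)$ with the compact embedding $H^1(\Omega)\hookrightarrow L^2(\Omega)$, concluding only that the resolvent is compact as an operator on $L^2(\Omega)$, and then cites a reference for discreteness of the spectrum. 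You instead invoke $H^2$ elliptic regularity for the Robin problem (with a constant allowed to depend on $a$) and Rellich--Kondrachov at the $H^2\hookrightarrow H^1$ level; this costs you an extra appeal to regularity theory but actually delivers the literal claim of the proposition, namely compactness from $L^2(\Omega)$ into $H^1(\Omega)$, which the paper's shorter argument does not quite establish. Both routes then reach discreteness of $\sigma(-\Delta_a)$ via the standard fact that a self-adjoint operator with compact resolvent has purely discrete spectrum.
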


\begin{proof}
    Fix $a>0$. Let $f\in L^2(\Omega)$, and set $\varphi_a := (-\Delta_a-\lambda)^{-1}f$. Since $\varphi_a$ solves the boundary value problem \eqref{eq:SolutionRobin}, by multiplying the PDE by $\overline{\varphi_a}$, integrating by parts, and using the boundary condition $\partial_\nu \varphi_a +a \varphi_a =0$ that $\varphi_a\in \mathrm{Dom}(-\Delta_a)$ satisfies, we get
    \begin{eqnarray*}
        \int_\Omega f \, \overline{\varphi_a} + \lambda \int_\Omega |\varphi_a|^2 &=& \int_\Omega -\Delta \varphi_a \, \overline{\varphi_a} = \int_\Omega |\nabla \varphi_a|^2-\int_{\partial\Omega} \partial_\nu \varphi_a \, \overline{\varphi_a} \\
        &=& \int_\Omega |\nabla \varphi_a|^2 + a \int_{\partial\Omega} |\varphi_a|^2.
    \end{eqnarray*}
    Hence, since $a>0$, by the triangle inequality, the Cauchy-Schwarz inequality, and the fact that 
    \begin{equation*}
        \|\varphi_a\|_{L^2(\Omega)} \leq \frac{1}{|\mathrm{Im}\lambda|} \|f\|_{L^2(\Omega)}
    \end{equation*}
    (see Remark~\ref{rmk:aboutDef}.\ref{eq:BoundResolvent}), we have
    \begin{eqnarray*}
        \|\nabla \varphi_a\|_{L^2(\Omega,\mathbb C^n)}^2 &\leq& \left| \int_\Omega f \, \overline{\varphi_a} + \lambda \int_\Omega |\varphi_a|^2 \right| \leq \|f\|_{L^2(\Omega)} \|\varphi_a\|_{L^2(\Omega)} + |\lambda| \|\varphi_a\|_{L^2(\Omega)}^2 \\
        &\leq& \left( \frac{1}{|\mathrm{Im}\lambda|} + \frac{|\lambda|}{|\mathrm{Im}\lambda|^2} \right)\|f\|_{L^2(\Omega)}^2.
    \end{eqnarray*}
    Therefore, 
    \begin{eqnarray*}
        \| \varphi_a\|_{H^1(\Omega)}^2 = \| \varphi_a\|_{L^2(\Omega)}^2 + \|\nabla \varphi_a\|_{L^2(\Omega,\mathbb C^n)}^2 \leq \frac{1+|\mathrm{Im}\lambda|+|\lambda|}{|\mathrm{Im}\lambda|^2} \|f\|_{L^2(\Omega)}^2,
    \end{eqnarray*}
    as desired. To conclude, since $H^1(\Omega)$ is compactly embedded in $L^2(\Omega)$ ---because $\Omega$ is bounded, see \cite[Theorem 9.16]{Brezis2011}---, by the former inequality we deduce that $(-\Delta_a-\lambda)^{-1}$ is a compact operator from $L^2(\Omega)$ to $L^2(\Omega)$. This, together with \cite[Proposition 8.8 in Appendix A and the paragraph below it]{Taylor2011}, yields the discreteness of $\sigma(-\Delta_a)$.
\end{proof}

We are now in the position to formalize the heuristic idea that the Robin Laplacian converges to the Neumann Laplacian as $a\to0^+$, and to the Dirichlet Laplacian as $a\to+\infty$. For the latter, we recall that the trace operator $t_{\partial\Omega}$ given by the trace theorem from $H^1(\Omega)$ to $H^{1/2}(\partial\Omega)$ \cite[Theorem 5.5.1]{Evans2010} extends into a bounded linear operator $t_\nu$ from $H(\mathrm{div},\Omega):=\{F\in L^2(\Omega,\mathbb C^n): \mathrm{div}F \in L^2(\Omega)\}$ to $H^{-1/2}(\partial\Omega)$ in the following sense: for all $F\in H^1(\Omega,\mathbb C^n)$, it holds that $t_\nu F = \nu \cdot t_{\partial\Omega} F$ almost everywhere on $\partial\Omega$. Moreover, 
\begin{equation} \label{eq:traceHdiv}
    \|\nu \cdot t_{\partial\Omega} F\|_{H^{-1/2}(\partial\Omega)} \leq C( \|F\|_{L^2(\Omega,\mathbb C^n)} + \|\mathrm{div} F\|_{L^2(\Omega)}),
\end{equation}
for some constant $C>0$ depending only on $\Omega$. 

\begin{proposition}
    The Robin Laplacian $-\Delta_a$ converges 
    \begin{enumerate}
        \item to the Neumann Laplacian $-\Delta_\mathrm{N}$, as $a\to0^+$, in the norm resolvent sense;
        \item to the Dirichlet Laplacian $-\Delta_\mathrm{D}$, as $a\to+\infty$, in the norm resolvent sense;
        \item to the Robin Laplacian $-\Delta_{a_0}$, as $a\to a_0$, in the norm resolvent sense, for every $a_0>0$.
    \end{enumerate}
\end{proposition}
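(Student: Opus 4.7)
The plan is to invoke Theorem~\ref{thm:RCenoughInOne}, which reduces each of the three convergences to verifying the convergence of the resolvents at a single spectral parameter. Since $-\Delta_a$, $-\Delta_{\mathrm{N}}$ and $-\Delta_{\mathrm{D}}$ are all non-negative self-adjoint, their spectra lie in $[0,+\infty)$, and hence $\lambda_\star=-1$ belongs to all their resolvent sets; I shall test at $\lambda_\star=-1$. For every $f\in L^2(\Omega)$, set $\varphi_a:=(-\Delta_a+1)^{-1}f$ and let $\varphi_\star\in\{\varphi_{\mathrm{N}},\varphi_{\mathrm{D}},\varphi_{a_0}\}$ denote the analogous function for the limiting operator; the aim is to bound $\|\varphi_a-\varphi_\star\|_{L^2(\Omega)}\leq \delta(a)\|f\|_{L^2(\Omega)}$ with $\delta(a)\to 0$ in the appropriate limit. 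The universal a priori estimate comes from testing $(-\Delta+1)\varphi_a=f$ against $\overline{\varphi_a}$ and integrating by parts using $\partial_\nu\varphi_a=-a\varphi_a$, which gives
\begin{equation*}
    \|\nabla\varphi_a\|_{L^2(\Omega)}^2+a\,\|\varphi_a\|_{L^2(\partial\Omega)}^2+\|\varphi_a\|_{L^2(\Omega)}^2 \;=\; \mathrm{Re}\int_\Omega f\,\overline{\varphi_a} \;\leq\; \|f\|_{L^2(\Omega)}^2,
\end{equation*}
simultaneously yielding the uniform bound $\|\varphi_a\|_{H^1(\Omega)}\leq C\|f\|_{L^2(\Omega)}$ and the key decay $\|\varphi_a\|_{L^2(\partial\Omega)}\leq a^{-1/2}\|f\|_{L^2(\Omega)}$.

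Cases~1 and~3 are straightforward. Setting $w_a:=\varphi_a-\varphi_\star$, the difference satisfies $(-\Delta+1)w_a=0$ in $\Omega$ together with $\partial_\nu w_a=-a\varphi_a$ on $\partial\Omega$ for the Neumann limit, and with $\partial_\nu w_a+a w_a=(a_0-a)\varphi_{a_0}$ for the Robin-to-Robin case. Testing the PDE against $\overline{w_a}$, integrating by parts, and using the boundary condition together with the trace theorem and the uniform $H^1$-bound above, one obtains respectively $\|w_a\|_{H^1(\Omega)}\leq C\,a\,\|f\|_{L^2(\Omega)}$ and $\|w_a\|_{H^1(\Omega)}\leq C\,|a-a_0|\,\|f\|_{L^2(\Omega)}$ (for $a$ in a compact neighborhood of $a_0$, so that $a$ stays bounded away from $0$), which clearly suffices for norm resolvent convergence.

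Case~2, convergence to the Dirichlet Laplacian as $a\to+\infty$, is the main obstacle, since now $w_a|_{\partial\Omega}=\varphi_a|_{\partial\Omega}$ does not vanish and the direct energy identity no longer controls $\|w_a\|$ by a small quantity. I would circumvent this by a duality argument. For $g\in L^2(\Omega)$ let $\psi_g:=(-\Delta_{\mathrm{D}}+1)^{-1}g$, which by standard $H^2$-regularity for the Dirichlet problem on the $C^2$-domain $\Omega$ belongs to $H^2(\Omega)\cap H^1_0(\Omega)$ with $\|\psi_g\|_{H^2(\Omega)}\leq C\|g\|_{L^2(\Omega)}$, so in particular $\|\partial_\nu \psi_g\|_{L^2(\partial\Omega)}\leq C\|g\|_{L^2(\Omega)}$ by the trace theorem. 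Since $(-\Delta+1)w_a=0$ in $\Omega$ and $\psi_g$ vanishes on $\partial\Omega$, two integrations by parts yield
\begin{equation*}
    \int_\Omega w_a\,\overline{g}\,dx \;=\; -\int_{\partial\Omega}\varphi_a\,\partial_\nu\overline{\psi_g}\,d\upsigma,
\end{equation*}
and hence $|\langle w_a,g\rangle_{L^2(\Omega)}|\leq \|\varphi_a\|_{L^2(\partial\Omega)}\,\|\partial_\nu\psi_g\|_{L^2(\partial\Omega)}\leq C\,a^{-1/2}\|f\|_{L^2(\Omega)}\|g\|_{L^2(\Omega)}$. Taking the supremum over $\|g\|_{L^2(\Omega)}\leq 1$ gives $\|w_a\|_{L^2(\Omega)}\leq C\,a^{-1/2}\|f\|_{L^2(\Omega)}$, which yields norm resolvent convergence with the explicit rate $a^{-1/2}$. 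The duality step is the delicate part: an attempt to control $w_a$ directly through its non-zero trace would force one into boundary interpolation estimates and $a$-uniform $H^2$-regularity for the Robin problem, both of which are avoided by transferring regularity to the Dirichlet resolvent of the test function $g$ instead.
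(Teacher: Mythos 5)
Your proof is correct, and its core strategy---reduce to a single spectral parameter via Theorem~\ref{thm:RCenoughInOne}, then control the difference of resolvents through a boundary term that the Robin condition makes small---is the same as the paper's. The differences lie in the execution. First, you test at $\lambda_\star=-1$ (legitimate, by non-negativity of all the operators involved) rather than at $\lambda=i$. Second, for the Neumann and Robin-to-Robin limits you estimate the difference $w_a$ directly by an energy identity, obtaining $\|w_a\|_{H^1(\Omega)}\leq Ca\|f\|_{L^2(\Omega)}$, resp.\ $C|a-a_0|\,\|f\|_{L^2(\Omega)}$, whereas the paper runs the same duality computation as in the Dirichlet case and bounds the resulting boundary pairings $-a\langle\varphi_a,\psi\rangle_{L^2(\partial\Omega)}$, resp.\ $(a_0-a)\langle\varphi_a,\varphi_{a_0}\rangle_{L^2(\partial\Omega)}$, by trace estimates; your version is marginally stronger (it gives convergence of the resolvents as operators into $H^1(\Omega)$), and, as you could have noted, the restriction of $a$ to a neighborhood of $a_0$ is unnecessary since the term $a\|w_a\|_{L^2(\partial\Omega)}^2$ is non-negative and can simply be discarded. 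Third, and most substantively, in the Dirichlet case the paper keeps the pairing in the form $\frac1a\langle\partial_\nu\varphi_a,\partial_\nu\psi\rangle_{L^2(\partial\Omega)}$ and bounds $\partial_\nu\varphi_a$ uniformly in $H^{-1/2}(\partial\Omega)$ via the normal-trace estimate \eqref{eq:traceHdiv} on $H(\mathrm{div},\Omega)$, obtaining the rate $O(a^{-1})$; you instead extract from the energy identity the boundary decay $\|\varphi_a\|_{L^2(\partial\Omega)}\leq a^{-1/2}\|f\|_{L^2(\Omega)}$ and pair it with the $L^2(\partial\Omega)$ bound on $\partial_\nu\psi_g$ furnished by Dirichlet $H^2$-regularity. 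This cleanly avoids the $H^{-1/2}$--$H^{1/2}$ duality and the $H(\mathrm{div},\Omega)$ trace machinery, at the price of the weaker---but still perfectly sufficient---rate $O(a^{-1/2})$.
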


\begin{proof}
    First, we prove the convergence as $a\to+\infty$. By Theorem~\ref{thm:RCenoughInOne}, it is enough to prove that the difference of resolvents at $\lambda=i$, given by
    \begin{equation*}
        W_{a,\mathrm{D}}:= (-\Delta_a-i)^{-1} - (-\Delta_\mathrm{D}-i)^{-1},
    \end{equation*}
    converges to zero in operator norm as $a\to+\infty$. To this end, let $f,g\in L^2(\Omega)$ and set
    \begin{eqnarray*}
        \varphi_a &:=& (-\Delta_a-i)^{-1}f \in \mathrm{Dom}(-\Delta_a) \subset H^2(\Omega), \\
        \psi &:=& (-\Delta_\mathrm{D}+i)^{-1} g \in \mathrm{Dom}(-\Delta_\mathrm{D}) \subset H^2(\Omega).
    \end{eqnarray*}
    By integrating by parts and by Remark~\ref{rmk:aboutDef}.\ref{item:rmkAdjointResolvent} applied to $-\Delta_\mathrm{D}$, we get
    \begin{equation} \label{eq:auxPairingD}
        \begin{split}
            \langle W_{a,\mathrm{D}} f, g \rangle_{L^2(\Omega)} &= \langle (-\Delta_a-i)^{-1}f - (-\Delta_\mathrm{D}-i)^{-1}f, g \rangle_{L^2(\Omega)} \\ 
        &= \langle (-\Delta_a-i)^{-1}f, g \rangle_{L^2(\Omega)} - \langle f, (-\Delta_\mathrm{D}+i)^{-1}g \rangle_{L^2(\Omega)} \\
        &= \langle \varphi_a, (-\Delta_\mathrm{D}+i)\psi \rangle_{L^2(\Omega)} - \langle (-\Delta_a-i)\varphi_a, \psi \rangle_{L^2(\Omega)} \\
        &= \langle \varphi_a, -\Delta \psi \rangle_{L^2(\Omega)} - \langle -\Delta \varphi_a, \psi \rangle_{L^2(\Omega)} \\
        &= - \langle \varphi_a, \partial_\nu \psi \rangle_{L^2(\partial\Omega)} + \langle \partial_\nu \varphi_a, \psi \rangle_{L^2(\partial\Omega)} \\
        &= \frac{1}{a} \langle \partial_\nu \varphi_a, \partial_\nu \psi \rangle_{L^2(\partial\Omega)},
        \end{split}
    \end{equation}
    where in the last equality we have used the boundary conditions $\partial_\nu \varphi_a + a \varphi_a=0$ and $\psi=0$, that $\varphi_a\in \mathrm{Dom}(-\Delta_a)$ and $\psi\in \mathrm{Dom}(-\Delta_\mathrm{D})$ satisfy. As we shall see afterward, and in view of \eqref{eq:auxPairingD}, in order to prove the convergence of $W_{a,\mathrm{D}}$ to zero in operator norm it is enough to bound the last pairing uniformly in $a$. This is what we do next.

    Since both $\partial_\nu \varphi_a$ and $\partial_\nu \psi$ belong to $H^{1/2}(\partial\Omega)$, by the property of the pairing \eqref{eq:Brezis}, the trace theorem from $H^1(\Omega)$ to $H^{1/2}(\partial\Omega)$ applied to $\nabla \psi$, and \eqref{eq:traceHdiv} applied to $F=\nabla\varphi_a$, we get
    \begin{equation} \label{eq:auxPairingBD}
        \begin{split}
            |\langle \partial_\nu \varphi_a, \partial_\nu \psi \rangle_{L^2(\partial\Omega)}| & \leq \|\partial_\nu \varphi_a\|_{H^{-1/2}(\partial\Omega)} \|\nabla \psi\|_{H^{1/2} (\partial\Omega)} \\
        & \leq C ( \|\nabla\varphi_a\|_{L^2(\Omega,\mathbb C^n)} + \|\Delta \varphi_a\|_{L^2(\Omega)} ) \|\psi\|_{H^2(\Omega)},
        \end{split}
    \end{equation}
    for some constant $C>0$ depending only on $\Omega$. On the one hand, since $\psi$ solves the boundary value problem
    \begin{equation*}
        \begin{cases}
            -\Delta\psi +i\psi = g & \text{in } L^2(\Omega), \\
            \psi =0 & \text{in } H^{1/2}(\partial \Omega),
        \end{cases}
    \end{equation*}
    with $g\in L^2(\Omega)$, by standard regularity theory ---see, for example, \cite[Theorem 4 in Section 6.3.2 and Remark in pg. 335]{Evans2010}--- we have
    \begin{equation} \label{eq:H2normD}
        \|\psi\|_{H^2(\Omega)} \leq C \|g\|_{L^2(\Omega)},
    \end{equation}
    for some constant $C>0$ depending only on $\Omega$. On the other hand, since $(-\Delta_a-i)\varphi_a=f$, by the triangle inequality and \eqref{eq:BoundResolvent} we have
    \begin{equation*}
        \|\Delta\varphi_a\|_{L^2(\Omega)} = \|f+i\varphi_a\|_{L^2(\Omega)} \leq \|f\|_{L^2(\Omega)} + \|\varphi_a\|_{L^2(\Omega)} \leq 2 \|f\|_{L^2(\Omega)},
    \end{equation*}
    Moreover, by Proposition~\ref{prop:boundedResolvent}, it holds that $\|\nabla\varphi_a\|_{L^2(\Omega,\mathbb C^n)}\leq 3 \|f\|_{L^2(\Omega)}$, and hence 
    \begin{equation} \label{eq:HdivnormPhia}
        \|\nabla\varphi_a\|_{L^2(\Omega,\mathbb C^n)} + \|\Delta \varphi_a\|_{L^2(\Omega)} \leq 5 \|f\|_{L^2(\Omega)}.
    \end{equation}
    Combining \eqref{eq:auxPairingBD}, \eqref{eq:H2normD} and \eqref{eq:HdivnormPhia} with \eqref{eq:auxPairingD}, we obtain
    \begin{equation*}
        |\langle W_{a,\mathrm{D}} f, g \rangle_{L^2(\Omega)}| \leq K/a \|f\|_{L^2(\Omega)} \|g\|_{L^2(\Omega)},
    \end{equation*}
    for some constant $K>0$ depending only on $\Omega$. Dividing both sides of this last inequality by $\|f\|_{L^2(\Omega)} \|g\|_{L^2(\Omega)}$, and taking the supremum among all functions $f,g\in L^2(\Omega)\setminus\{0\}$, we conclude that
    \begin{equation*}
        \|W_{a,\mathrm{D}}\|_{L^2(\Omega)\to L^2(\Omega)} \leq K/a.
    \end{equation*}
    Norm resolvent convergence follows by letting $a\to+\infty$. 
    
    The proof of the convergence to the Neumann Laplacian as $a\to0^+$ is analogous. As before, we will show that 
    \begin{equation*}
        W_{a,\mathrm{N}}:= (-\Delta_a-i)^{-1} - (-\Delta_\mathrm{N}-i)^{-1},
    \end{equation*}
    converges to zero in operator norm as $a\to0^+$. Given $f,g\in L^2(\Omega)$, set
    \begin{eqnarray*}
        \varphi_a &:=& (-\Delta_a-i)^{-1}f \in \mathrm{Dom}(-\Delta_a) \subset H^2(\Omega), \\
        \psi &:=& (-\Delta_\mathrm{N}+i)^{-1} g \in \mathrm{Dom}(-\Delta_\mathrm{N}) \subset H^2(\Omega).
    \end{eqnarray*}
    Similarly as in \eqref{eq:auxPairingD}, integration by parts and Remark~\ref{rmk:aboutDef}.\ref{item:rmkAdjointResolvent} applied to $-\Delta_\mathrm{N}$ straightforwardly lead to
    \begin{equation*}
        \langle W_{a,\mathrm{N}} f, g \rangle_{L^2(\Omega)} = -\langle \varphi_a, \partial_\nu \psi \rangle_{L^2(\partial\Omega)} +\langle \partial_\nu \varphi_a, \psi \rangle_{L^2(\partial\Omega)}.
    \end{equation*}
    Using now the boundary conditions $\partial_\nu \varphi_a + a \varphi_a = 0$ and $\partial_\nu \psi = 0$, that $\varphi_a\in \mathrm{Dom}(-\Delta_a)$ and $\psi\in \mathrm{Dom}(-\Delta_\mathrm{N})$ satisfy, we get that
    \begin{equation*}
        \langle W_{a,\mathrm{N}} f, g \rangle_{L^2(\Omega)} = -a\langle \varphi_a, \psi \rangle_{L^2(\partial\Omega)}.
    \end{equation*}
    As we did before, it is enough to bound the pairing in the right hand side uniformly in $a$. This is what we do next.

    Since both $\varphi_a$ and $\psi$ belong to $H^1(\Omega)$, by the trace theorem from $H^1(\Omega)$ to $H^{1/2}(\partial\Omega)$ we have
    \begin{eqnarray*}
        |\langle \varphi_a, \psi \rangle_{L^2(\partial\Omega)}| & \leq & \|\varphi_a\|_{L^2(\partial\Omega)} \|\psi\|_{L^2(\partial\Omega)} \leq \|\varphi_a\|_{H^{1/2}(\partial\Omega)} \|\psi\|_{H^{1/2}(\partial\Omega)} \\
            & \leq & C \|\varphi_a\|_{H^1(\Omega)} \|\psi\|_{H^1(\Omega)},
    \end{eqnarray*}
    for some constant $C>0$ depending only on $\Omega$. On the one hand, by Proposition~\ref{prop:boundedResolvent} there holds $\|\varphi_a\|_{H^1(\Omega)}\leq 3 \|f\|_{L^2(\Omega)}$. On the other hand, since $\psi$ solves the boundary value problem
    \begin{equation*}
        \begin{cases}
            -\Delta\psi = g-i\psi & \text{in } L^2(\Omega), \\
             \partial_\nu \psi =0 & \text{in } H^{1/2}(\partial \Omega),
        \end{cases}
    \end{equation*}
    then, by the triangle inequality, Cauchy-Schwarz, and \eqref{eq:BoundResolvent}, it holds that
    \begin{equation*}
        \|\psi\|_{H^1(\Omega)}^2 = \|\psi\|_{L^2(\Omega)}^2 + \|\nabla \psi\|_{L^2(\Omega,\mathbb C^n)}^2 \leq \|g\|_{L^2(\Omega)}^2 + \left| \int_\Omega (g-i\psi) \, \overline{\psi} \right|^2 \leq 5\|g\|_{L^2(\Omega)}^2. 
    \end{equation*}
    Combining the previous estimates, we obtain the desired uniform bound. 
    
    The norm resolvent convergence to $-\Delta_{a_0}$ as $a\to a_0$ readily follows. We sketch the proof omitting the details. Given $f,g\in L^2(\Omega)$, one sets 
    \begin{eqnarray*}
        \varphi_a &:=& (-\Delta_a-i)^{-1}f \in \mathrm{Dom}(-\Delta_a) \subset H^2(\Omega), \\
        \varphi_{a_0} &:=& (-\Delta_{a_0}+i)^{-1} g \in \mathrm{Dom}(-\Delta_{a_0}) \subset H^2(\Omega)
    \end{eqnarray*}
    and straightforwardly verifies that
    \begin{equation*}
        \langle (-\Delta_a-i)^{-1}f - (-\Delta_{a_0}-i)^{-1}f, g\rangle_{L^2(\Omega)} = (a_0-a) \langle\varphi_a, \varphi_{a_0} \rangle_{L^2(\partial\Omega)}.
    \end{equation*}
    Applying Proposition~\ref{prop:boundedResolvent} to both $\varphi_a$ and $\varphi_{a_0}$, one immediately bounds the pairing in the right hand side uniformly in $a$ and $a_0$, and concludes the proof of the theorem.
\end{proof}

It is clear from the previous proof that the same result remains true if instead of the Laplacian we consider uniformly elliptic operators in divergence form.

\begin{proposition}
    Let $A = (A_{ij}(x))_{i,j=1,\dots,n}\in C^{0,1}(\Omega, \mathbb C^n\times \mathbb C^n)$, and assume that $A$ is Hermitian, that is, such that $A_{ij}(x) = \overline{A_{ji}(x)}$ for almost every $x\in \Omega$. Furthermore, assume that there exist $0<\lambda<\Lambda<+\infty$ such that:
    \begin{enumerate}
        \item $|A(x)\xi|\leq \Lambda|\xi|$ for every $\xi\in \mathbb C^n$, for almost every $x\in \Omega$, and
        \item $A(x)\xi \cdot \overline \xi \geq \lambda |\xi|^2$ for every $\xi\in \mathbb C^n$, for almost every $x\in \Omega$.
    \end{enumerate}
    Then, the operator defined for $a\in(0,+\infty)$ as
    \begin{eqnarray*}
        \mathrm{Dom}(L_a) &:=& \{u\in H^2(\Omega): \nu\cdot A\nabla u +au =0 \text{ in } H^{1/2}(\partial \Omega) \}, \\
        L_a u &:=& -\mathrm{div}(A\nabla u) \quad \text{for every } u\in \mathrm{Dom}(L_a),
    \end{eqnarray*}
    \begin{enumerate}
        \item converges, as $a\to+\infty$, in the norm resolvent sense to the operator
        \begin{eqnarray*}
            \mathrm{Dom}(L_\mathrm{D}) &:=& H^2(\Omega)\cap H^1_0(\Omega), \\
            L_\mathrm{D} u &:=& -\mathrm{div}(A\nabla u) \quad \text{for every } u\in \mathrm{Dom}(L_\mathrm{D});
        \end{eqnarray*}
        \item converges, as $a\to0^+$, in the norm resolvent sense to the operator
        \begin{eqnarray*}
            \mathrm{Dom}(L_\mathrm{N}) &:=& \{u\in H^2(\Omega): \nu\cdot A\nabla u =0 \text{ in } H^{1/2}(\partial \Omega) \}, \\
            L_\mathrm{N} u &:=& -\mathrm{div}(A\nabla u) \quad \text{for every } u\in \mathrm{Dom}(L_\mathrm{N});
        \end{eqnarray*}
        \item converges, as $a\to a_0$, in the norm resolvent sense to $L_{a_0}$, for every $a_0>0$.
    \end{enumerate}
\end{proposition}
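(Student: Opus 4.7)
The plan is to mimic the three-part argument for the Laplacian, replacing $\partial_\nu$ by the conormal derivative $\nu \cdot A\nabla$, using the ellipticity and boundedness of $A$ where needed, and invoking the appropriate version of elliptic regularity for divergence-form operators with Lipschitz coefficients. The Hermitian hypothesis on $A$ together with the ellipticity gives self-adjointness of $L_a$, $L_\mathrm{D}$, $L_\mathrm{N}$ in $L^2(\Omega)$, so their resolvents at $\lambda=i$ are well defined, and by Theorem~\ref{thm:RCenoughInOne} it suffices to test norm resolvent convergence at $\lambda=i$.

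First I would prove the analogue of Proposition~\ref{prop:boundedResolvent} for $L_a$, namely $\|(L_a-\lambda)^{-1}f\|_{H^1(\Omega)} \leq C_\lambda \|f\|_{L^2(\Omega)}$ uniformly in $a>0$. The argument is identical: multiplying $L_a\varphi_a - \lambda\varphi_a = f$ by $\overline{\varphi_a}$ and integrating by parts produces the identity
\begin{equation*}
    \int_\Omega A\nabla\varphi_a \cdot \overline{\nabla\varphi_a} + a\int_{\partial\Omega} |\varphi_a|^2 = \int_\Omega f\,\overline{\varphi_a} + \lambda\int_\Omega |\varphi_a|^2,
\end{equation*}
and ellipticity $A\xi\cdot\overline\xi \geq \lambda|\xi|^2$ together with the $L^2$ resolvent bound \eqref{eq:BoundResolvent} controls $\|\nabla\varphi_a\|_{L^2}$ uniformly in $a$. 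This also gives the discreteness of $\sigma(L_a)$ exactly as before.

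Next, for the three convergences I would repeat the integration-by-parts computation of \eqref{eq:auxPairingD}. For any $f,g \in L^2(\Omega)$, setting $\varphi_a := (L_a - i)^{-1}f$ and $\psi := (L_\bullet + i)^{-1}g$ (with $\bullet \in \{\mathrm{D}, \mathrm{N}, a_0\}$), the divergence theorem applied to $A\nabla\varphi_a$ and $A\nabla\psi$ (both of which lie in $H(\mathrm{div},\Omega)$) yields
\begin{equation*}
    \langle (L_a-i)^{-1}f - (L_\bullet - i)^{-1}f, g\rangle_{L^2(\Omega)} = -\langle \varphi_a, \nu\cdot A\nabla\psi\rangle_{L^2(\partial\Omega)} + \langle \nu\cdot A\nabla\varphi_a, \psi\rangle_{L^2(\partial\Omega)}.
\end{equation*}
Plugging in the respective boundary conditions $\nu\cdot A\nabla\varphi_a = -a\varphi_a$ and, respectively, $\psi=0$, $\nu\cdot A\nabla\psi = 0$, $\nu\cdot A\nabla\psi = -a_0\psi$, reduces the right-hand side to $\frac{1}{a}\langle \nu\cdot A\nabla\varphi_a, \nu\cdot A\nabla\psi\rangle_{L^2(\partial\Omega)}$, $-a\langle\varphi_a,\psi\rangle_{L^2(\partial\Omega)}$, and $(a_0-a)\langle\varphi_a,\psi\rangle_{L^2(\partial\Omega)}$, respectively.

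The remaining work is to bound these boundary pairings uniformly in $a$. For the Neumann and $a\to a_0$ cases, the trace theorem from $H^1$ to $H^{1/2}$ together with the uniform $H^1$ bound from the first step suffices. For the Dirichlet case, I would bound $|\langle \nu\cdot A\nabla\varphi_a, \nu\cdot A\nabla\psi\rangle_{L^2(\partial\Omega)}|$ by $\|\nu\cdot A\nabla\varphi_a\|_{H^{-1/2}(\partial\Omega)} \|\nu\cdot A\nabla\psi\|_{H^{1/2}(\partial\Omega)}$, controlling the first factor via \eqref{eq:traceHdiv} applied to $F = A\nabla\varphi_a$ (here $\|A\nabla\varphi_a\|_{L^2}\leq\Lambda\|\nabla\varphi_a\|_{L^2}$ and $\|\mathrm{div}(A\nabla\varphi_a)\|_{L^2} = \|-f + i\varphi_a\|_{L^2}$ are both uniformly bounded), and the second factor via the trace theorem applied to $A\nabla\psi\in H^1(\Omega,\mathbb C^n)$. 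The main technical point, and the place where the argument is slightly more delicate than for the pure Laplacian, is that this last step needs $\psi \in H^2(\Omega)$ with $\|\psi\|_{H^2} \lesssim \|g\|_{L^2}$; this requires elliptic regularity up to the boundary for divergence-form operators with Lipschitz coefficients on a $C^2$ domain, which is available precisely because $A\in C^{0,1}$ (see \cite[Section 6.3]{Evans2010}). Once these three uniform boundary estimates are in hand, dividing by $\|f\|_{L^2}\|g\|_{L^2}$, taking the supremum, and letting $a\to+\infty$, $a\to0^+$, or $a\to a_0$ respectively concludes the proof.
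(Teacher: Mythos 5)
Your proposal is correct and follows exactly the route the paper intends: the paper omits the proof entirely, stating only that it is ``clear from the previous proof'' for the Robin Laplacian, and your argument is precisely that adaptation, with $\partial_\nu$ replaced by the conormal derivative $\nu\cdot A\nabla$, ellipticity substituting for the trivial coercivity of the identity matrix, and \eqref{eq:traceHdiv} applied to $F=A\nabla\varphi_a$. You also correctly isolate the one genuinely new ingredient, namely $H^2$ boundary regularity with $\|\psi\|_{H^2(\Omega)}\lesssim\|g\|_{L^2(\Omega)}$ for divergence-form operators with $C^{0,1}$ coefficients, which is exactly where the hypothesis $A\in C^{0,1}$ enters.
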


\begin{acknowledgement}
The author thanks Maicol Caponi, Alessandro Carbotti, Giuseppe Cosma Brusca, and Alberto Maione for enlightening discussions regarding the relations between resolvent and $G$- and $\Gamma$-convergence. The author also thanks the suggestions of the anonymous referee, which have enriched Section~\ref{sec:relationGamma}. 
\end{acknowledgement}

\ethics{Competing Interests}{ The author is supported by the Spanish grants PID2021-123903NB-I00 and RED2022-134784-T funded by MCIN/AEI/10.13039/501100011033, by ERDF ``A way of making Europe", and by the Catalan grant 2021-SGR-00087. This work is supported by the Spanish State Research Agency, through the Severo Ochoa and Mar\'ia de Maeztu Program for Centers and Units of Excellence in R\&D (CEX2020-001084-M), and more specifically by the grant CEX2020-001084-M-20-1. The author acknowledges CERCA Programme/Generalitat de Catalunya for institutional support.}

%
%
%

\end{document}